\def\edge{\ar@{-}}
\def\dedge{\ar@{.}}
\long\def\ignore#1{#1}
\newtheorem{theorem}{Theorem}[section]
\newtheorem{corollary}[theorem]{Corollary}
\newtheorem{lemma}[theorem]{Lemma}
\newtheorem{proposition}[theorem]{Proposition}
\theoremstyle{definition}
\newtheorem{definition}[theorem]{Definition}
\newtheorem{claim}[theorem]{Claim}
\newtheorem{example}[theorem]{Example}
\newtheorem{remark}[theorem]{Remark}
\def\ca{{\mathcal A}}
\def\ch{{\mathcal H}}
\def\co{{\mathcal O}}
\def\mn{{\mathbb N}}
\def\mq{{\mathbb Q}}
\def\mz{{\mathbb Z}}
\def\k{{K}}
\def\oq{\mathcal{O}_{q}}
\def\oqmmnk{\oq(M(m,n))}
\def\oqmnnk{\oq(M(n,n))}
\def\oqmkn{\oq(M(k,n))}
\def\oqmknk{\oq(M(k,n))}
\def\oqmkp{\oq(M(k,p))}
\def\oqgkn{\oq(G(k,n))}
\def\oqgtwofour{\oq(G(2,4))}
\def\oqgthreesix{\oq(G(3,6))}
\title{The automorphism group of the quantum grassmannian
\footnote{This research was partly supported by EPSRC grant EP/R009279/1.}
}
\author{S Launois and T H Lenagan}
\date{}
\begin{document}
\maketitle

\begin{abstract}
We calculate the automorphism group of the generic quantum grassmannian.
\end{abstract} 


\section{Introduction} \label{section-introduction} 

The quantum grassmannian $\oqgkn$ is a noncommutative algebra that is a deformation of the homogeneous coordinate ring of the classical grassmannian of $k$-planes in $n$-space. In this paper, we calculate the automorphism group of the quantum grassmannian in the generic case where the deformation parameter $q$ is not a root of unity.

Typically, quantised coordinate algebras are much more rigid than their classical counterparts, in the sense that the automorphism group of the quantum object is much smaller than that of the classical object. Nevertheless, it has proven difficult to calculate these automorphism groups and only a few examples are known where the calculation has been completed, see, for example, \cite{ac,gy,launois,ll1,ll2,y1,y2}. 
The automorphism group of quantum matrices \cite{ll1,y1} will prove crucial in our present work.

The quantum grassmannian $\oqgkn$ is generated as an algebra by the $k\times k$ quantum minors of the quantum matrix algebra $\oqmknk$. These generators are called quantum Pl\"ucker coordinates and there is a natural partial order on the quantum Pl\"ucker coordinates  which is illustrated in the case of $\oqgthreesix$ in Figure~\ref{figure-3x6-ordering}. There are two obvious sources of automorphisms for $\oqgkn$. The first is by restricting column  automorphisms of $\oqmknk$ to the subalgebra $\oqgkn$, these automorphisms are described in Section~\ref{section-dhom-autos}. The second is via studying certain automorphisms of the (noncommutative) dehomogenisation of $\oqgkn$ which is isomorphic to a skew Laurent extension of $\oqmkp$, with $p=n-k$,  as we shall see in Section~\ref{section-dehomogenisation}. 

In Section~\ref{section-dhom-autos} we study these ``obvious'' automorphisms of $\oqgkn$ and consider the relations between them. We then claim that these provide all of the automorphisms of $\oqgkn$, and justify the claim in the following sections. 

The quantum grassmannian carries the structure of an $\mn$-graded algebra, with each quantum Plucker coordinate having degree one. In Section~\ref{section-adjusting-autos}, we exploit this grading in a series of lemmas to see that we can essentially fix the minimal and maximal elements in the poset after allowing adjustment by the automorphisms that we have found in  Section~\ref{section-dhom-autos}. 

In Section~\ref{section-transfer} we study these adjusted automorphisms and show that such an automorphism induces, via the dehomogenisation equality, an automorphism of $\oqmknk$. Once this has been done, our main result follows easily in Section~\ref{section-main-result} from the known structure of the automorphism group of quantum matrices. 


\section{Basic definitions} \label{section-basic-definitions}

Throughout the paper, we work with a field $\k$ and a nonzero element $q\in\k$ which is not a root of unity.

The algebra of $m\times n$ quantum matrices over $\k$, denoted by $\oqmmnk$, is 
the algebra generated over $\k$ by 
$mn$ indeterminates 
$x_{ij}$, with $1 \le i \le m$ and $1 \le j \le n$,  which commute with the elements of 
$\k$ and are subject to the relations:
\[
\begin{array}{ll}  
x_{ij}x_{il}=qx_{il}x_{ij},&\mbox{ for }1\le i \le m,\mbox{ and }1\le j<l\le
n\: ;\\ 
x_{ij}x_{kj}=qx_{kj}x_{ij}, & \mbox{ for }1\le i<k \le m, \mbox{ and }
1\le j \le n \: ; \\ 
x_{ij}x_{kl}=x_{kl}x_{ij}, & \mbox{ for } 1\le k<i \le m,
\mbox{ and } 1\le j<l \le n \: ; \\
x_{ij}x_{kl}-x_{kl}x_{ij}=(q-q^{-1})x_{il}x_{kj}, & \mbox{ for } 1\le i<k \le
m, \mbox{ and } 1\le j<l \le n.
\end{array}
\]
It is well known that $\oqmmnk$ is an iterated Ore extension over $\k$ with the $x_{ij}$ added 
in lexicographic order. An immediate consequence is that $\oqmmnk$ is a noetherian domain. 

When $m=n$, the {\em quantum determinant} $D_q$ is defined by;
\[
D_q:= \sum\,(-q)^{l(\sigma)}x_{1\sigma(1)}\dots x_{n\sigma(n)},
\]
where the sum is over all permutations $\sigma$ of $\{1,\dots,n\}$. 

The quantum determinant is a central element in the algebra of quantum matrices $\oqmnnk$. 

If $I$ and $J$ are $t$-element subsets of $\{1,\dots,m\}$ and $\{1,\dots,n\}$, respectively, 
then the {\em quantum minor} $[I\mid J]$ is defined to be the quantum determinant of the 
$t\times t$ quantum matrix subalgebra generated by the variables $x_{ij}$ with $i\in I$ and $j\in J$. 

The {\em homogeneous coordinate ring of the $k\times n$ quantum grassmannian}, $\oqgkn$ (informally known as the {\em quantum grassmannian}) is the subalgebra of $\oqmknk$ generated by the $k\times k$ quantum minors of $\oqmknk$, see, for example, \cite{klr}. 

The quantum grassmannian $\oq(G(1,n))$ is a quantum affine space, and, as such, its 
automorphism group is known, see \cite{ac}; so we will assume throughout this paper that $k>1$. 
Also, we will see in Proposition~\ref{proposition-k-(n-k)} that 
$\oq(G(k,n))\cong \oq(G(n-k,n))$, so in calculating the automorphism group we will assume that $2k\leq n$ (and so $n\geq 4$, as $k\geq 2$).

A $k\times k$ quantum minor of $\oqmknk$ must use all of the $k$ rows, and so we can specify the quantum minor by specifying the columns that define it. With this in mind, we will write 
$[J]$ for the quantum minor $[1,\dots,k\mid J]$, for any $k$-element subset $J$ of $\{1,\dots,n\}$. Quantum minors of this type are called {\em quantum Pl\"ucker coordinates}. The set of quantum Pl\"ucker 
coordinates in $\oqgkn$ is denoted by $\Pi$. There is a natural partial order on $\Pi$ defined in the following way: if $I=[i_1<\dots<i_k]$ and $J=[j_1 < \dots < j_k]$ then $[I]<[J]$ if and only if 
$i_l\leq j_l$ for each $l=1,\dots,k$. This partial order is illustrated for the case of $\oqgthreesix$ in 
Figure~\ref{figure-3x6-ordering}. A  {\em standard monomial} in the quantum Pl\"ucker coordinates is an expression of the form $[I_1][I_2]\dots[I_t]$ where $I_1\leq I_2\leq\dots\leq I_t$ in this partial order. The set of all standard monomials forms a vector space basis of $\oqgkn$ over $\k$, see, for example, \cite[Proposition 2.8]{klr}.

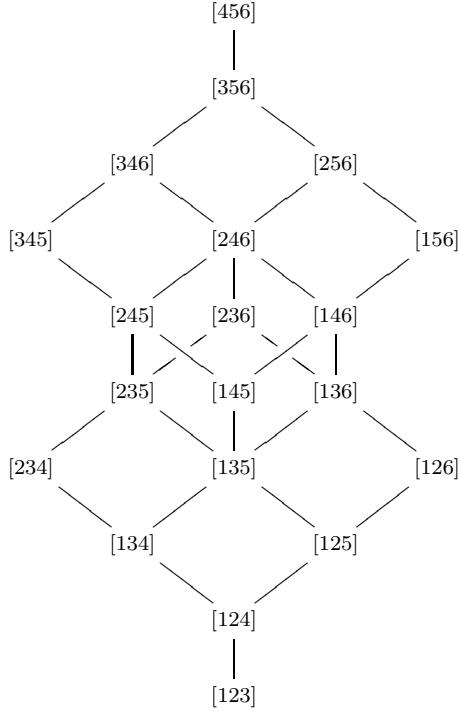
\begin{figure}[ht]
\ignore{
$$\xymatrixrowsep{2.4pc}\xymatrixcolsep{3.2pc}\def\objectstyle{\scriptstyle}
\xymatrix@!0{
 &&  [456 ]\edge[d]\\
 &&   [356 ] \edge[dl] \edge[dr]\\
 &   [346 ] \edge[dl] \edge[dr] &
 &  [256 ] \edge[dl] \edge[dr]\\
  [345 ] \edge[dr] &&   [246 ] \edge[dl] \edge[dr] \edge[d] 
&&   [156 ] 
  \edge[dl]\\
 &   [245 ] \edge[d] \edge[dr] 
 &   [236 ] \edge[dl]|\hole \edge[dr]|\hole &  [146 ]
\edge[dl] \edge[d]\\
 &  [235 ] \edge[dl] \edge[dr] 
 &  [145 ] \edge[d] &  [136 ] \edge[dl] \edge[dr]\\
  [234 ] \edge[dr] && [135 ] \edge[dl] \edge[dr] 
&& [126 ]\edge[dl]\\
 & [134 ] \edge[dr] && [125 ] \edge[dl]\\
 && [124 ] \edge[d]\\
 && [123 ]
}$$
\caption{The partial order on $\Pi$ for $\oq(G(3,6))$.}
\label{figure-3x6-ordering}
}
\end{figure}


\section{Dehomogenisation of $\oqgkn$}\label{section-dehomogenisation}

An element $a$ in a ring $R$ is said to be a {\em normal} element of $R$ provided that $aR=Ra$. If $R$ is a domain then a nonzero normal element $a$ may be inverted, as the Ore condition for the set $S:=\{a^n\}$ is easily verified. Standard results for noncommutative noetherian rings can be found in the books by Goodearl and Warfield \cite{gw} and McConnell and Robson \cite{mcr}.

 Set $u=\{1,\dots, k\}$. Then $[u]$ commutes with all other quantum Pl\"ucker coordinates up to a power of $q$ as the following lemma shows.

\begin{lemma}\label{lemma-how-u-commutes}
Let $[I]$ be a quantum Pl\"ucker coordinate in $\oqgkn$ and
set $d(I):= \# \left(I\backslash (I\cap u)\right)\geq 0$. Then $[u][I]=q^{d(I)}[I][u]$. 
\end{lemma}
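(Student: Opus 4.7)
The plan is to reduce the $q$-commutation of $[u]$ with $[I]$ to the $q$-commutation of $[u]$ with individual generators $x_{il}$ of $\oqmknk$, and then to recover the lemma by expanding $[I]$ as a signed sum of monomials via the quantum determinant formula.

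The first task is to establish, for each $1 \le i \le k$ and $1 \le l \le n$, the commutation rules
\[
[u]\, x_{il} = \begin{cases} x_{il}\, [u] & \text{if } l \le k, \\ q\, x_{il}\, [u] & \text{if } l > k. \end{cases}
\]
The case $l \le k$ would follow from the observation that $[u]$ is the quantum determinant of the $k\times k$ quantum matrix subalgebra generated by $\{x_{ij} : 1 \le i,j \le k\}$, which is a copy of $\oq(M(k,k))$, combined with the well-known centrality of the quantum determinant in a square quantum matrix algebra. The case $l > k$ is a standard quasi-commutation relation for a leading principal quantum minor; I would prove it by induction on $k$ using a Laplace expansion of $[u]$ along its last row, or alternatively verify it by direct manipulation of the four families of generating relations.

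Given those identities, I would then write $I = \{j_1 < \cdots < j_k\}$ and expand
\[
[I] = \sum_{\sigma \in S_k} (-q)^{l(\sigma)}\, x_{1,\, j_{\sigma(1)}}\, x_{2,\, j_{\sigma(2)}} \cdots x_{k,\, j_{\sigma(k)}}.
\]
Passing $[u]$ to the right past each monomial on the right-hand side introduces a factor of $q$ for every index $s$ such that $j_{\sigma(s)} > k$. Since the set of column indices that appear is always the same, namely $I$, the total exponent equals $\#\{j \in I : j > k\} = d(I)$, independent of $\sigma$. Each term therefore contributes the same scalar $q^{d(I)}$, and summing over $\sigma$ yields $[u][I] = q^{d(I)}[I][u]$, as required.

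The main obstacle will be the second commutation rule $[u]\,x_{il} = q\,x_{il}\,[u]$ for $l > k$. The identity itself is well known, but a self-contained derivation requires some care: when $x_{il}$ is commuted through a monomial appearing in the expansion of $[u]$, the fourth relation of $\oqmknk$ produces correction terms, and one has to argue that these contributions cancel when summed over all monomials defining $[u]$. An inductive argument using the Laplace expansion, or an appeal to the quasi-normality results for leading quantum minors in \cite{klr}, would provide the cleanest route.
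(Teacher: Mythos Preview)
Your argument is correct. Once the two commutation identities $[u]x_{il}=x_{il}[u]$ for $l\le k$ and $[u]x_{il}=qx_{il}[u]$ for $l>k$ are in hand, the expansion of $[I]$ as a signed sum of monomials immediately gives the result, since every monomial involves exactly the column set $I$ and hence picks up the same factor $q^{d(I)}$. The first identity is immediate from centrality of the quantum determinant; the second is the standard quasi-commutation of the leading $k\times k$ minor with generators in columns to its right, and your suggested inductive Laplace-expansion proof (or an appeal to the commutation results for quantum minors in \cite{klr} or \cite{gl}) does the job.

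The paper's own proof is purely by citation: it invokes general commutation formulae for pairs of quantum Pl\"ucker coordinates from \cite{kl} (combining their Lemma~3.7 and Theorem~3.4), or alternatively specialises \cite[Corollary~1.1]{klr} to the case where one of the two coordinates is $[u]$, observing that the correction sum in that formula is empty. Your route is genuinely different and more elementary: rather than appealing to a general Pl\"ucker--Pl\"ucker commutation identity, you drop down to the level of the generators $x_{il}$ of the ambient quantum matrix algebra and exploit the fact that $[u]$ is a quantum determinant there. The advantage of your approach is that it is essentially self-contained and makes the exponent $d(I)$ transparent as a count of columns of $I$ lying outside $\{1,\dots,k\}$; the advantage of the paper's approach is brevity and the fact that it situates the lemma as a special case of known straightening/commutation machinery, at the cost of sending the reader to external sources.
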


\begin{proof} 
This can be obtained from \cite{kl} by combining  Lemma 3.7  and Theorem 3.4 of that paper. (Note that \cite{kl} uses $q^{-1}$ as the deformation parameter where we use $q$; so care must be taken in interpreting their results.) It can also be extracted from \cite[Corollary 1.1]{klr} by setting $[I]=[u]$ in the statement of the corollary and noting that the summation on the right hand side of the displayed equation is then empty. 
\end{proof} 

 As $\oqgkn$ is generated by the  quantum Pl\"ucker coordinates it follows from the previous lemma that the element 
 $[u]$ is a normal element and so we may invert $[u]$ to obtain the overring $\oqgkn[[u]^{-1}]$.

For $1\leq i\leq k$ and $1\leq j\leq n-k$, set

\[
x_{ij}:=[1\dots,\widehat{k+1-i},\dots k, j+k][u]^{-1}\in \oqgkn[[u]^{-1}].
\]

The case $a=1$ of \cite[Theorem 2.2]{lr} shows that the elements $x_{ij}$ generate 
an algebra $R$, say, that sits inside $\oqgkn[[u]^{-1}]$ and is isomorphic to $\co_q(M(k,n-k))$. Also, 
\[
\oqgkn[[u]^{-1}]=R[[u],[u]^{-1}] 
\]
(NB. The way we've fixed things, that really is an equality in the above display, rather than just an isomorphism.) In the rest of this note, we will write $R=\co_q(M(k,p))$ where $p:=n-k$ and when we are operating on the right hand side of this equality, we will write $y$ for $[u]$.

As $d([1\dots,\widehat{k+1-i},\dots k, j+k])=1$, it follows from Lemma~\ref{lemma-how-u-commutes} that 
$[u]x_{ij}=qx_{ij}[u]$ and that $yx_{ij}=qx_{ij}y$. \\

The equality above says that 
\begin{eqnarray}
\label{equation-dehomogenisation}
\oqgkn[[u]^{-1}]=
\oqmkp[y,y^{-1}; \sigma]
\end{eqnarray}
where $\sigma$ is the automorphism of $\oqmkp$ such that $\sigma(x_{ij})=qx_{ij}$ for 
each $i=1,\dots,k$ and $j=1,\dots,p$. We will refer to Equation (\ref{equation-dehomogenisation}) as the {\em dehmogenisation equality}.

The next lemma gives the formulae for passing between quantum minors and Pl\"ucker coordinates in the above equality.
\begin{lemma} \label{lemma-to-and-fro}
(i) 
Let $[I\mid J]$ be a quantum minor in $R=\oqmkp$. Then 
\[
[I\mid J] = [\{1\dots k\}\backslash (k+1-I)\sqcup (k+J)][u]^{-1}\in\oqgkn[[u]^{-1}]\,.
\]
(ii) Let $[L]$ be a quantum Pl\"ucker coordinate in $\oqgkn$ and write $L=L_{\leq k}\cap L_{>k}$ 
where $L_{\leq k}=L\cap\{1,\dots,k\}$ and $L_{>k} =L\cap\{k+1,\dots,n\}$. Then 
\[
[L] = [(k+1)-\left(\{1,\dots,k\}\backslash L_{\leq k}\right)\mid  L_{>k}-k]y\in\oqmkp[y,y^{-1}; \sigma] 
\]
\end{lemma}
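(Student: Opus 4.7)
The plan is to prove (i) by induction on $t := |I| = |J|$, and then to derive (ii) as a combinatorial inversion. Part (i) is a packaging, in terms of quantum minors, of the isomorphism $R \cong \oqmkp$ that comes from \cite[Theorem 2.2]{lr}; the lemma gives an explicit formula for how this isomorphism interacts with minors of arbitrary size.

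For the base case $t = 1$ of (i), the formula reduces directly to the defining equation of $x_{ij}$: with $I = \{i\}$ and $J = \{j\}$ one has $(k+1) - I = \{k+1-i\}$ and $k + J = \{k+j\}$, so the column set on the right-hand side is $\{1,\dots,k\} \backslash \{k+1-i\} \sqcup \{k+j\}$, which matches the Pl\"ucker coordinate appearing in the definition of $x_{ij}$. For the inductive step, I would expand $[I \mid J]$ by the quantum Laplace expansion along its first row, obtaining a sum of products of a single $x$-entry with a $(t-1) \times (t-1)$ minor. Each $x$-entry can be replaced using the $t=1$ case, each smaller minor using the inductive hypothesis, and all $[u]^{-1}$ factors can then be collected on the right using Lemma \ref{lemma-how-u-commutes} (which contributes controlled powers of $q$). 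The resulting sum of signed, $q$-weighted products of Pl\"ucker coordinates times $[u]^{-t}$ must match $[K][u]^{-t}$, with $K = \{1,\dots,k\}\backslash(k+1-I) \sqcup (k+J)$; this reduces to a quantum Pl\"ucker (Laplace-type) identity in $\oqgkn$ of the kind proved in \cite{kl,klr}. The main technical obstacle is the bookkeeping of signs and $q$-powers on both sides, but the $t = 1$ case fixes the normalisation and the two Laplace expansions are structurally compatible.

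For (ii), given a Pl\"ucker coordinate $[L]$, set $I := (k+1) - (\{1,\dots,k\}\backslash L_{\leq k})$ and $J := L_{>k} - k$. A direct check shows that $\{1,\dots,k\}\backslash(k+1-I) = L_{\leq k}$ and $k + J = L_{>k}$, so the column set appearing on the right-hand side of (i) for this choice of $(I,J)$ is $L_{\leq k}\sqcup L_{>k} = L$. Hence (i) gives $[I \mid J] = [L] [u]^{-1} = [L] y^{-1}$, and multiplying on the right by $y$ yields the desired formula $[L] = [I \mid J]\, y$, completing the proof.
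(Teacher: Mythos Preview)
Your argument for part (ii) is essentially identical to the paper's: define $I$ and $J$ combinatorially, check that the column set in (i) for this $(I,J)$ is exactly $L$, and apply (i) to conclude $[I\mid J][u]=[L]$.

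For part (i) the paper gives no independent argument at all; it simply invokes \cite[Proposition~3.3]{lr} (case $a=1$), where the formula is already established. Your proposal to reprove this by induction on $t$ via a quantum Laplace expansion is a different route, and it is plausible in outline, but as written it has a slip and a gap. The slip: you say the resulting expression ``must match $[K][u]^{-t}$'', but the lemma asserts $[I\mid J]=[K][u]^{-1}$ with a \emph{single} $[u]^{-1}$ regardless of $t$. With one Laplace step, writing the entry as $[P_s][u]^{-1}$ (base case) and the $(t{-}1)$-minor as $[Q_s][u]^{-1}$ (inductive hypothesis), and commuting via Lemma~\ref{lemma-how-u-commutes} with $d(Q_s)=t-1$, you obtain $q^{-(t-1)}\sum_s(-q)^{\bullet}[P_s][Q_s]\,[u]^{-2}$, which must equal $[K][u]^{-1}$; equivalently, you need an identity $\sum_s(-q)^{\bullet}[P_s][Q_s]=q^{\,t-1}[K][u]$ in $\oqgkn$.

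The gap is that this last identity \emph{is} the content of the result. You defer it to ``a quantum Pl\"ucker (Laplace-type) identity of the kind proved in \cite{kl,klr}'' without pinning down which one or verifying the $q$-powers; that verification is exactly the work done in \cite{lr}. So your induction does not bypass the difficulty---it repackages it as a quadratic Pl\"ucker relation whose proof you still owe. The paper's approach (cite \cite{lr}) and yours ultimately rest on the same computation; the difference is only whether it is quoted or redone.
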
 

\begin{proof} (i) This formula occurs as the case 
$a=1$ of \cite[Proposition 3.3]{lr} which gives the formula for general quantum 
minors of $R=\oqmkp$ in terms of quantum Pl\"ucker coordinates of $\oqgkn$. \\
(ii) Let $[L]$ be a quantum Pl\"ucker coordinate in $\oqgkn$. Set $I =(k+1)-(\{1,\dots,k\}\backslash L_{\leq k})$ and $J= L_{>k} -k$. Note that $| I |=| J|=|L_{>k}|$ and so we can form the quantum minor $[I\mid J]$. Apply (i) to $[I\mid J]$ to see that 
\begin{eqnarray*}
[I\mid J][u]&=&[\{1,\dots,k\}\backslash \{(k+1)-\{(k+1)-(\{1,\dots,k\}\backslash L_{\leq k})\}\sqcup k+( L_{>k} -k)]\\
&=& [ \{1,\dots,k\} \backslash (\{1,\dots,k\}\backslash L_{\leq k})\sqcup L_{>k}]=[L_{\leq k}\sqcup L_{>k}]=[L], 
\end{eqnarray*}
so that (ii) is established. 
\end{proof} 

The following corollary to the above lemma will be useful in later calculations.

\begin{corollary}\label{corollary-belonging-to-L}
Suppose that $[L]$ is a quantum Pl\"ucker coordinate in $\oqgkn$ and that $[L]=[I\mid J][u]$ for some $[I\mid J]\in\oqmkp$. \\
(i) Let $i\in\{1,\dots,k\}$. Then $i\in I$ if and only if $(k+1)-i\not\in L$.\\
(ii) Let $j\in\{1\dots,p\}$. Then $j\in J$ if and only if $j+k\in L$.
\end{corollary}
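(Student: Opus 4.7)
The plan is to reduce everything to Lemma~\ref{lemma-to-and-fro}(ii), which already describes explicitly the unique $[I\mid J]$ that yields a given Pl\"ucker coordinate under the dehomogenisation equality. First I would observe that the hypothesis $[L]=[I\mid J][u]$ determines $[I\mid J]$ uniquely: multiplying on the right by $[u]^{-1}$ gives $[I\mid J]=[L][u]^{-1}$ inside $\oqgkn[[u]^{-1}]$. So the pair $(I,J)$ appearing in the corollary must be exactly the one described in Lemma~\ref{lemma-to-and-fro}(ii), namely
\[
I = (k+1)-\bigl(\{1,\dots,k\}\setminus L_{\leq k}\bigr), \qquad J = L_{>k}-k.
\]

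Next I would prove (ii), which is the easier of the two. Since $j\in\{1,\dots,p\}$, we have $j+k\in\{k+1,\dots,n\}$, so $j+k\in L$ is equivalent to $j+k\in L_{>k}$, which is equivalent to $j\in L_{>k}-k = J$. This gives (ii) immediately.

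For (i) I would unwind the definition of $I$ symbolically. Given $i\in\{1,\dots,k\}$, $i\in I$ means $i=(k+1)-m$ for some $m\in\{1,\dots,k\}\setminus L_{\leq k}$, i.e.\ $m=k+1-i$ lies in $\{1,\dots,k\}\setminus L_{\leq k}$. Since $k+1-i$ automatically lies in $\{1,\dots,k\}$, this is equivalent to $k+1-i\notin L_{\leq k}$. Finally, because $k+1-i\leq k$, the condition $k+1-i\notin L_{\leq k}$ is the same as $k+1-i\notin L$, yielding (i).

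I do not expect any serious obstacle here: both parts are essentially bookkeeping about the set-theoretic bijection $\{1,\dots,k\}\to\{1,\dots,k\}$, $i\mapsto k+1-i$, and the disjoint decomposition $L=L_{\leq k}\sqcup L_{>k}$. The only mild subtlety is making sure that the $[I\mid J]$ in the statement coincides with the one produced by Lemma~\ref{lemma-to-and-fro}(ii), which is handled at the start by the uniqueness observation above.
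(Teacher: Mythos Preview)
Your proposal is correct and matches the paper's intended approach: the corollary is stated without proof as an immediate consequence of Lemma~\ref{lemma-to-and-fro}, and your argument simply unwinds the formulae $I=(k+1)-(\{1,\dots,k\}\setminus L_{\leq k})$ and $J=L_{>k}-k$ from part~(ii) of that lemma. The only point worth making explicit is that the element $[L][u]^{-1}$ determines the index sets $I,J$ (not merely the element $[I\mid J]$), but this follows since distinct quantum minors in $\oqmkp$ are linearly independent.
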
 

We will use dehomogenisation, see Equation (\ref{equation-dehomogenisation}), in the next three sections to transfer the problem of finding automorphisms 
of $\oqgkn$ to that of finding automorphisms of $\oq(M(k,p))$ where the problem has been solved in 
\cite{ll1} and \cite{y1}. Before doing that, we illustrate the usefulness of dehomogenisation by 
the following two results, the second of which identifies an extra automorphism of $\oqgkn$ 
in the case that $n=2k$.

First, note that there is an isomorphism $\tau$ between the quantum matrix algebras $\oq(M(k,n-k))=K(x_{ij})$ and $\oq(M(n-k,k))=K(x'_{ij})$ that sends $x_{ij}$ to $x'_{ji}$ and more generally sends a quantum minor $[I\mid J]$ of  $\oq(M(k,n-k))$ to the quantum minor $[J\mid I]$ of $\oq(M(n-k,k))$. Note that, with a slight abuse of notation, we denote quantum minors of $\oq(M(n-k,k))$ without dashes to differentiate them from quantum minors from $\oq(M(k,n-k))$. (When $k=n-k$ and $x_{ij}=x'_{ij}$ this is automorphism of quantum matrices given by transposition and so we will refer to $\tau$ as the {\em transpose isomorphism}.) This automorphism extends to an isomorphism from 
$\oq(M(k,n-k))[y,y^{-1};\sigma]$ to $\oq(M(n-k,k))[y',y'^{-1};\sigma']$, where $\sigma(x_{ij}) = qx_{ij}$ and 
$\sigma'(x'_{ij}) = qx'_{ij}$.  We also denote this extension by $\tau$. 

\begin{proposition} \label{proposition-k-(n-k)}
$\oqgkn\cong\oq(G(n-k,n))$ via an automorphism that sends the 
quantum Pl\"ucker coordinate $[L]$ of $\oqgkn$ to the quantum Pl\"ucker coordinate 
$[w_0(\,\widehat{L}\,)]$ of $\oq(G(n-k,n))$, where $\widehat{L}$ is the complement of $L$ in $\{1,\dots,n\}$ and 
$w_0$ is the longest element of the symmetric group on $\{1,\dots,n\}$; 
that is, $w_0$ reverses the order of $\{1,\dots,n\}$.
\end{proposition}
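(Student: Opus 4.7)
The plan is to use the transpose isomorphism $\tau$ introduced just above, combined with the dehomogenisation equality~(\ref{equation-dehomogenisation}), to produce the required isomorphism. Applying~(\ref{equation-dehomogenisation}) to both sides yields $\oqgkn[[u]^{-1}] = \oq(M(k,p))[y, y^{-1}; \sigma]$ with $p = n-k$, and, writing $u' = \{1, \ldots, p\}$ for the minimal Plücker coordinate of $\oq(G(n-k,n))$, $\oq(G(n-k,n))[[u']^{-1}] = \oq(M(p,k))[y', y'^{-1}; \sigma']$. The extended $\tau$ between these two skew Laurent extensions is then, tautologically, an isomorphism between the two localisations; the task is to show that it restricts to an isomorphism of the two quantum Grassmannians sending $[L]$ to $[w_0(\widehat{L})]$.

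The main step is to compute the image under $\tau$ of each quantum Plücker coordinate $[L]$ of $\oqgkn$. Using Lemma~\ref{lemma-to-and-fro}(ii), I would write $[L] = [I \mid J]\,y$ with $I = (k+1) - (\{1,\ldots,k\} \setminus L_{\leq k})$ and $J = L_{>k} - k$. Since $\tau$ sends $[I \mid J] \mapsto [J \mid I]$ and $y \mapsto y'$, this gives $\tau([L]) = [J \mid I]\,y'$, which Lemma~\ref{lemma-to-and-fro}(i), applied with the roles of $k$ and $p$ interchanged, identifies as the quantum Plücker coordinate of $\oq(G(n-k,n))$ indexed by $(\{1,\ldots,p\} \setminus ((p+1)-J)) \sqcup (p+I)$. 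A direct check, splitting $\widehat{L}$ as $(\{1,\ldots,k\} \setminus L_{\leq k}) \sqcup (\{k+1,\ldots,n\} \setminus L_{>k})$ and applying $w_0$ to each piece separately, shows that this subset equals $w_0(\widehat{L})$: the first piece contributes the indices in $\{n-k+1,\ldots,n\}$ matching $p+I$, while the second contributes the indices in $\{1,\ldots,p\}$ matching $\{1,\ldots,p\} \setminus ((p+1)-J)$.

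With each generator of $\oqgkn$ shown to land in $\oq(G(n-k,n))$, the restriction of $\tau$ maps $\oqgkn$ into $\oq(G(n-k,n))$; applying the same argument to $\tau^{-1}$ gives the reverse inclusion, so the restriction is the required isomorphism, and its action on quantum Plücker coordinates is precisely $[L] \mapsto [w_0(\widehat{L})]$. The only real obstacle is the purely combinatorial bookkeeping in the middle step, where three layers of reversal and shift ($w_0$, the $(k+1)-(\cdot)$ operation from Lemma~\ref{lemma-to-and-fro}(ii), and the separate treatment of $L_{\leq k}$ versus $L_{>k}$) need to be lined up, but the manipulations are entirely forced by the formulae of Lemma~\ref{lemma-to-and-fro}.
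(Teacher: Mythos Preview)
Your proposal is correct and follows essentially the same approach as the paper: set up the composite isomorphism $\oqgkn[[u]^{-1}] = \oq(M(k,p))[y^{\pm1};\sigma] \xrightarrow{\tau} \oq(M(p,k))[y'^{\pm1};\sigma'] = \oq(G(n-k,n))[[u']^{-1}]$, track a Pl\"ucker coordinate $[L]$ through it via Lemma~\ref{lemma-to-and-fro}, and then do the combinatorial bookkeeping to identify the image as $[w_0(\widehat{L})]$. The paper writes the combinatorics out as an explicit chain of equalities and concludes directly from ``generators map to generators'' without separately invoking $\tau^{-1}$, but these are purely expository differences.
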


\begin{proof}
Recall that we are assuming that $2k\leq n$. There is an isomorphism
\begin{eqnarray*}  
\lefteqn{\oqgkn[[u]^{-1}]= \oq(M(k,n-k))[y,y^{-1}; \sigma] \cong^1}\\
&&
\oq(M(n-k,k)[y',y'^{-1};\sigma']
\cong\oq(G(n-k,n))[[u']^{-1}],
\end{eqnarray*}
where $[u']=[1,\dots ,n-k]\in\oq(G(n-k,n))$ and $\sigma'(x_{ij}')=qx_{ij}'$. (Note that $\cong^1$ is given by applying the transpose isomorphism $\tau$ that sends $[I\mid J]$ to $[J\mid I]$ and sends $y=[u]=[1,\dots,k]$ to $y'=[u']=[1,\dots,n-k]$.)

We need to track the destination of an arbitrary quantum Pl\"ucker coordinate of 
$[L]$ of $\oqgkn$ under this isomorphism, using the formulae that we have developed above for translating between quantum Pl\"ucker coordinates and quantum minors. 

\begin{eqnarray*}
[L]
&=& 
[L_{\leq k}\sqcup L_{>k}] \\
&=& [(k+1)-(\{1,\dots,k\}\backslash L_{\leq k}) \mid L_{>k} -k]y\qquad (\in\oq(M(k,n-k))y)\\
&\overset{1}{\mapsto}& 
[L_{>k} -k \mid (k+1)-(\{1,\dots,k\}\backslash L_{\leq k})]y'\qquad (\in\oq(M(n-k,k))y')\\
&=&
[\{1,\dots,n-k\}\backslash(n+1 - L_{>k})\sqcup (n-k) + ((k+1)-(\{1,\dots,k\}\backslash 
L_{\leq k}))]\\
&=& [\{1,\dots,n-k\}\backslash(n+1 - L_{>k})\sqcup ((n+1)-(\{1,\dots,k\}\backslash 
L_{\leq k}))]\\
&=&
 [\{1,\dots,n-k\}\backslash w_0(L_{>k})\sqcup w_0(\{1,\dots,k\}\backslash L_{\leq k})]\\
 &=&
 [\{1,\dots,n-k\}\backslash w_0(L_{>k})\sqcup \{n-k+1,\dots n\}\backslash 
 w_0(L_{\leq k})]\\
 &=&
 [\{1,\dots,n\}\backslash \{w_0(L_{>k})\sqcup w_0(L_{\leq k})\}]\\
 &=&
  [\{1,\dots,n\}\backslash w_0(L)]\\
  &=&
 [\,\widehat{w_0(L)}\,]= [w_0(\,\widehat{L}\,)].
\end{eqnarray*}

As the quantum Pl\"ucker coordinates  of $\oqgkn$ generate $\oqgkn$ as an algebra, and  their images 
generate $\oq(G(n-k,n))$ as an algebra, 
this calculation shows that the isomorphism displayed at the beginning of the proof restricts to an 
isomorphism between $\oqgkn$ and $\oq(G(n-k,n))$. 
\end{proof}

An immediate corollary of this result is the following.

\begin{corollary}\label{corollary-diagram-auto} 
When $2k=n$, 
there is an automorphism of $\oqgkn$ which sends the quantum Pl\"ucker coordinate 
$[I]$ to $[w_0(\,\widehat{I}\,\,)]$, where $w_0$  is the longest element of the symmetric group on $\{1,\dots,n\}$.
\end{corollary}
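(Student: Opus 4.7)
The plan is essentially to observe that this is nothing more than the self-referential case of Proposition~\ref{proposition-k-(n-k)}. Since we are assuming $2k=n$, we have $n-k=k$, and therefore $\oq(G(n-k,n)) = \oq(G(k,n)) = \oqgkn$. So the isomorphism supplied by Proposition~\ref{proposition-k-(n-k)} becomes a self-map on $\oqgkn$, i.e.\ an automorphism.

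Concretely, I would invoke Proposition~\ref{proposition-k-(n-k)} to produce the isomorphism $\Phi:\oqgkn\to\oq(G(n-k,n))$ sending $[L]\mapsto[w_0(\widehat{L})]$, and then simply note that under the hypothesis $2k=n$ the target algebra coincides with the source algebra, so $\Phi$ is an automorphism of $\oqgkn$ with the prescribed action on the quantum Pl\"ucker coordinates. Since Proposition~\ref{proposition-k-(n-k)} was proved by tracking the image of an arbitrary $[L]$ through the chain of isomorphisms coming from the dehomogenisation equality and the transpose isomorphism, no further computation is required here: the Pl\"ucker-level formula $[L]\mapsto[w_0(\widehat{L})]$ has already been established in full generality.

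There is essentially no obstacle; the only thing to double-check is that the hypothesis $2k\leq n$ built into Proposition~\ref{proposition-k-(n-k)} is compatible with the equality $2k=n$, which is obvious. Thus the corollary follows with no extra argument beyond specialising the previous proposition.
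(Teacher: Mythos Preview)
Your proposal is correct and matches the paper's approach exactly: the corollary is stated immediately after Proposition~\ref{proposition-k-(n-k)} with no separate proof, since it is just the specialisation $n-k=k$ of that proposition.
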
 

\begin{remark}
The automorphism 
 in the previous corollary will be called the {\em diagram automorphism}. 
In Figure~\ref{figure-3x6-ordering},  which shows the standard poset for $\oq(G(3,6))$, the effect of this automorphism 
on the quantum Pl\"ucker coordinates is seen by reflection of the poset in the vertical. For example, $[126]$ is sent to 
$[w_0(\,\widehat{126}\,)]=[w_0(345)]=[234]$. There is a diagram automorphism for $\oqgkn$ only in the case that $n=2k$. Note that both the diagram automorphism and the transpose automorphism $\tau$ extend to 
$\oq(G(k,2k)([u]^{-1})=\oq(M(k,k))[y,y^{-1};\sigma]$ and they agree on this common overring, so we denote the diagram automorphism by $\tau$ also.
\end{remark} 


\section{Obvious automorphisms of $\oqgkn$}\label{section-dhom-autos}

There are two obvious sources of automorphisms of $\oqgkn$. The first is via the inclusion $\oqgkn\subseteq \oqmkn$.  
The second is by using the 
the dehomogenisation equality introduced in Section~\ref{section-dehomogenisation}: 
\[
\co_q(M(k,p))[y^{\pm 1}; \sigma] = \oqgkn[[u]^{-1}],
\]
where $p=n-k$ and $u=\{1,\dots,k\}$ while  $\sigma$ is the automorphism of $\oqmkp$ such that $\sigma(x_{ij})=qx_{ij}$ for 
each $i=1,\dots,k$ and $j=1,\dots,p$.

In this section, we introduce these automorphisms and consider the connections between them. 

First,  $\oqgkn$ is a subalgebra of $\oqmknk$ by definition. The torus $\ch_0:=(\k^*)^n$ acts by column multiplication on $\oqmknk$ and this induces an action on $\oqgkn$ 
 defined on quantum Pl\"ucker coordinates by 
\[
(\beta_1,\dots,\beta_n)\cdot 
[i_1,\dots,i_k] = \beta_{i_1}\dots\beta_{i_k}[i_1,\dots,i_k].
\]
This is the torus action on $\oqgkn$ that is considered in  papers such as \cite{lln,llr-selecta}.

Secondly, there is an action of the torus $(\k^*)^{k+p}$ on $\oqmkp$ which operates by row and column scaling, so that 
$(\alpha_1,\dots,\alpha_k;\beta_1,\dots,\beta_p)\cdot x_{ij}=\alpha_i\beta_jx_{ij}$. As $n=k+p$, we can extend this 
to an action of the torus $\ch_1:=(\k^*)^{n+1}$ on $\oqmkp[y^{\pm 1}; \sigma]$ by setting 
\[
(\alpha_0;\alpha_1,\dots,\alpha_k;\beta_1,\dots,\beta_p)\cdot x_{ij}=\alpha_i\beta_jx_{ij},\quad 
(\alpha_0;\alpha_1,\dots,\alpha_k;\beta_1,\dots,\beta_p)\cdot y=\alpha_0y.
\]  Set $h=(\alpha_0;\alpha_1,\dots,\alpha_k;\beta_1,\dots,\beta_p)\in\ch_1$. It is easy to check that 
$h\cdot[I\mid J]= \alpha_I\beta_J[I\mid J]$, where $\alpha_I:=\alpha_{i_1}\dots\alpha_{i_k}$ when 
$I=[\alpha_{i_1},\dots,\alpha_{i_k}]$, and  $\beta_J=\beta_{j_1}\dots\beta_{j_p}$ when $J=[\beta_{j_1},\dots,\beta_{j_k}]$.

The dehomogenisation equality induces an action of $\ch_1$ on $\oqgkn[[u]^{-1}]$. For   
$h=(\alpha_0;\alpha_1,\dots,\alpha_k;\beta_1,\dots,\beta_p)\in\ch_1$ we have 
$h\cdot [u]=\alpha_0[u]$. If  $[L]$ is any other quantum Pl\"ucker coordinate then we may write $[L]=[I\mid J][u]$, where $I= (k+1)-\left(\{1,\dots,k\}\backslash L_{\leq k}\right)$ 
and $J=L_{>k}-k$, by Lemma~\ref{lemma-to-and-fro}.  Then 
\[
h\cdot[L]=h\cdot [I\mid J]\times h\cdot [u]=\alpha_I\beta_J[I\mid J]\times \alpha_0[u]=\alpha_0\alpha_I\beta_J[I\mid J][u]=\alpha_0\alpha_I\beta_J[L].
\]

As the quantum Pl\"ucker coordinates generate $\oqgkn$ and are sent to scalar multiples of themselves by each $h\in\ch_1$, such $h$ act as automorphisms of $\oqgkn$.

We now consider connections between the actions of $\ch_0$ and $\ch_1$ on $\oqgkn$.

\begin{lemma} \label{lemma-H0-is-in-H1}
For every automorphism $g\in\ch_0$ acting on  $\oqgkn$ there is an automorphism $f\in\ch_1$ which has the same action on $\oqgkn$. 
\end{lemma}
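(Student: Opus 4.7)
The plan is to construct $f \in \ch_1$ explicitly from the given $g = (\beta_1, \dots, \beta_n) \in \ch_0$ and verify that $f$ and $g$ produce the same scalar on every quantum Pl\"ucker coordinate; since the Pl\"ucker coordinates generate $\oqgkn$ and each $f \in \ch_1$ already acts as an automorphism of $\oqgkn$, this suffices.

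Concretely, I would try setting
\[
\alpha_0 := \beta_1\beta_2\cdots\beta_k,\qquad \alpha_j := \beta_{k+1-j}^{-1}\ \ (1\le j\le k),\qquad \beta'_j := \beta_{k+j}\ \ (1\le j\le p),
\]
and take $f := (\alpha_0;\alpha_1,\dots,\alpha_k;\beta'_1,\dots,\beta'_p)$. The choice of $\beta'_j$ is forced by the fact that the columns $k+1,\dots,n$ of $\oqgkn$ correspond directly to the columns of $\oqmkp$, while the $\alpha_j$ are chosen with the index-reversal $j \mapsto k+1-j$ dictated by Lemma~\ref{lemma-to-and-fro}(ii), and the ``extra'' factor $\alpha_0$ compensates for the fact that $f$ rescales $[u]=y$ while $g$ rescales it by $\beta_1\cdots\beta_k$.

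To verify agreement on a general Pl\"ucker coordinate $[L]$, I would write $[L] = [I\mid J][u]$ via Lemma~\ref{lemma-to-and-fro}(ii), so that the action of $f$ gives the scalar $\alpha_0\,\alpha_I\,\beta'_J$. Using Corollary~\ref{corollary-belonging-to-L}, this product rewrites as
\[
(\beta_1\cdots\beta_k)\cdot\!\!\prod_{i\in\{1,\dots,k\}\setminus L_{\le k}}\!\!\beta_i^{-1}\cdot\prod_{i\in L_{>k}}\beta_i \;=\; \prod_{i\in L_{\le k}}\beta_i\cdot\prod_{i\in L_{>k}}\beta_i \;=\;\prod_{i\in L}\beta_i,
\]
which is exactly the scalar by which $g$ acts on $[L]$. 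The only real obstacle is bookkeeping: tracking the index reversal $i\mapsto k+1-i$ coming from Lemma~\ref{lemma-to-and-fro}(ii) and recognising that the ``missing'' factors $\prod_{i\notin L_{\le k}}\beta_i^{-1}$ combine with $\beta_1\cdots\beta_k$ to give precisely $\prod_{i\in L_{\le k}}\beta_i$. Once this explicit match is established on all generators, the lemma follows.
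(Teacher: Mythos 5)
Your construction of $f$ is exactly the one in the paper (there $f=(a_1\cdots a_k;a_k^{-1},\dots,a_1^{-1};a_{k+1},\dots,a_n)$), and your verification is correct; the only cosmetic difference is that the paper checks agreement of $f$ and $g$ on the generators $x_{ij}$ and $y^{\pm1}$ of the dehomogenised algebra, whereas you check it directly on each Pl\"ucker coordinate $[L]=[I\mid J][u]$ via Corollary~\ref{corollary-belonging-to-L}. Both routes are essentially the same argument, so your proof stands as written.
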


\begin{proof}
Let $g=(a_1,\dots,a_n)\in\ch_0$. We seek $f=(\alpha_0;\alpha_1,\dots,\alpha_k;\beta_1,\dots,\beta_p)\in\ch_1$ such that the actions of $g$ and $f$ on $\oqgkn$ are the same.
As $g\cdot [u]= a_1\dots a_k [u]$, we may extend $g$ to act on $\oqgkn[[u]^{-1}]$. The dehomogenisation equality then transfers this action to 
$\co_q(M(k,p))[y^{\pm 1}; \sigma]$. We calculate the action of $g$ on the generators $x_{ij}$ and $y^{\pm 1}$ of $\co_q(M(k,p))[y^{\pm 1}; \sigma]$.
As $y$ corresponds to $[u]$ in the dehomogenisation equality, $g\cdot y= a_1\dots a_k y$ and $g\cdot y^{-1} = (a_1\dots a_k)^{-1}y^{-1}$. Now,
$x_{ij}:=[1\dots,\widehat{k+1-i},\dots k, j+k][u]^{-1}$; so 
 \begin{eqnarray*}
 g\cdot x_{ij} &=& g\cdot [1\dots,\widehat{k+1-i},\dots k, j+k]\times g\cdot [u]^{-1}\\
 &=&
 a_1\dots a_ka_{k+1-i}^{-1}a_{j+k} [1\dots,\widehat{k+1-i},\dots k, j+k]\times (a_1\dots a_k)^{-1} [u]^{-1}\\
 &=&
 a_{j+k}a_{(k+1)-i}^{-1} [1\dots,\widehat{k+1-i},\dots k, j+k][u]^{-1}\\
 &=&
  a_{j+k}a_{(k+1)-i}^{-1} x_{ij}
\end{eqnarray*}

We seek an $f\in\ch$ which has the same effect on the generators $x_{ij}$ and $y$. Set 
$f=(a_1\dots a_k;a_k^{-1},\dots,a_1^{-1};a_{k+1},\dots,a_n)\in\ch$.
Then $f\cdot y=a_1\dots a_k y=g\cdot y$. Also, the entry in $f$ multiplying the $i$th row is $a_{(k+1)-i}^{-1}$ and the entry multiplying the $j$th column is $a_{k+j}$ so 
$f\cdot x_{ij}= a_{(k+1)-i}^{-1}a_{k+j}x_{ij}=g\cdot x_{ij}$.  Hence, $f$ and $g$ agree on the generators $x_{ij}$ and $y^{\pm 1}$ of 
$\co_q(M(k,p))[y^{\pm 1}; \sigma]=\oqgkn[[u]^{-1}]$; so the actions of $f$ and $g$ on $\oqgkn$ are the same.\end{proof}

The converse question is more delicate, as the following example shows.

\begin{example} 
Let $K$ be a field in which there is no element $b$ such that $b^2=2$ (eg $\mq$) and consider $\oqgtwofour$ over this field. Let $f=(1;2,1;1,1)\in\ch_1$. Then there is no element $g=(a_1,a_2,a_3,a_4)\in\ch_0$ whose action on 
$\oqgtwofour$ coincides with the action of $f$ on $\oqgtwofour$.  To verify this claim, we use the formulae in Lemma~\ref{lemma-to-and-fro} to see that 
\[
[12]=[u],\;[13]=x_{11}[u],\;[14]=x_{12}[u],\;[23]=x_{21}[u],\;[24]=x_{22}[u],\;[34]=[12\mid 12][u].
\]
These equations lead to the following actions of $f$ on the quantum Pl\"ucker coordinates: 
\[
f\cdot[12]=[12],\;f\cdot[13]=2[13],\;f\cdot[14]=2[14],\;f\cdot[23]=[23],\;f\cdot[24]=[24],\;f\cdot[34]=2[34].
\]
Next, for $g=(a_1,a_2,a_3,a_4)\in\ch_0$ we see that 
\[
g\cdot[12]=a_1a_2[12],\;g\cdot[13]=a_1a_3[13],\;g\cdot[14]=a_1a_4[14],\;
\]
\[g\cdot[23]=a_2a_3[23],\;g\cdot[24]=a_2a_4[24],\;g\cdot[34]=a_3a_4[34];
\]
so for this $g$ to act in the same way as $f$ we require that 
\[
a_1a_2= 1,\;a_1a_3= 2,\;a_1a_4= 2,\;a_2a_3= 1,\;a_2a_4= 1,\;a_3a_4= 2.
\]
Equations 4 and 5 immediately above show that we need $a_3=a_4\;(=b {\rm ~say})$ and then equation 6 gives $b^2=2$, which is not possible for any $b\in K$. 
\end{example} 

In view of this example, it is appropriate to assume that $K$ is algebraically closed in the following result. 

\begin{lemma}
Suppose that $K$ is algebraically closed. Consider $\oqgkn$ and $\oqmkp$ over $K$. For every automorphism $f\in\ch_1$ acting on $\oqgkn$ there is an automorphism $g\in\ch_0$ which has the same action on $\oqgkn$. 
\end{lemma}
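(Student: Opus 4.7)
The plan is to produce $g = (a_1,\dots,a_n) \in \ch_0$ by matching characters on quantum Pl\"ucker coordinates. Write $f = (\alpha_0;\alpha_1,\dots,\alpha_k;\beta_1,\dots,\beta_p)$. From the scalar formula derived in Section~\ref{section-dhom-autos} together with Lemma~\ref{lemma-to-and-fro}, the action of $f$ on a quantum Pl\"ucker coordinate $[L]$ with $L = L_{\leq k}\sqcup L_{>k}$ is multiplication by
\[
\chi_f([L]) \;=\; \alpha_0 \cdot \prod_{i\in\{1,\dots,k\}\setminus L_{\leq k}} \alpha_{k+1-i} \cdot \prod_{s\in L_{>k}} \beta_{s-k},
\]
whereas $g$ acts on $[L]$ by $\prod_{i\in L} a_i$. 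Since the quantum Pl\"ucker coordinates generate $\oqgkn$, the task reduces to solving the multiplicative system $\prod_{i\in L} a_i = \chi_f([L])$ over all $k$-subsets $L\subseteq\{1,\dots,n\}$.

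My first step would be to decouple this system by absorbing the $\alpha$'s and $\beta$'s into the unknowns. Set $c_i := a_i\alpha_{k+1-i}$ for $i\in\{1,\dots,k\}$ and $c_s := a_s/\beta_{s-k}$ for $s\in\{k+1,\dots,n\}$. Using the identity
\[
\alpha_1\cdots\alpha_k \;=\; \Bigl(\prod_{i\in\{1,\dots,k\}\setminus L_{\leq k}}\alpha_{k+1-i}\Bigr)\Bigl(\prod_{i\in L_{\leq k}}\alpha_{k+1-i}\Bigr),
\]
a routine manipulation rewrites the system as
\[
\prod_{i\in L} c_i \;=\; \alpha_0\alpha_1\cdots\alpha_k
\]
for every $k$-subset $L\subseteq\{1,\dots,n\}$, whose right-hand side is a constant independent of $L$.

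The key observation is the rigidity this forces: given any two distinct indices $i,j\in\{1,\dots,n\}$, one can choose (since $n\geq 2k\geq k+2$) a pair of $k$-subsets $L$ and $L'$ differing only by a swap of $i$ and $j$, and comparison of the corresponding equations immediately yields $c_i = c_j$. Hence all of the $c_i$ share a common value $c$, and the entire system collapses to the single equation $c^k = \alpha_0\alpha_1\cdots\alpha_k$.

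This last equation is precisely where algebraic closedness is needed, and is the main (indeed, the only) obstacle: one extracts a $k$-th root to obtain $c\in K^*$, and then recovers $a_i := c/\alpha_{k+1-i}$ for $i\leq k$ and $a_s := c\beta_{s-k}$ for $s>k$, producing the required $g\in\ch_0$. The preceding example shows that without algebraic closedness this root extraction can genuinely fail, so the hypothesis cannot be dropped.
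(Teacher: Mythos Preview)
Your proof is correct and takes a somewhat different route from the paper's. The paper argues via generators of $\ch_1$: it reduces to the elements that are $1$ in all but one coordinate and then handles three separate cases (a single $\beta_j$, a single $\alpha_i$ with $1\le i\le k$, or $\alpha_0$ alone), extracting a $k$-th root in the latter two cases. You instead treat a general $f$ at once: the substitution $c_i=a_i\alpha_{k+1-i}$, $c_s=a_s/\beta_{s-k}$ collapses the full multiplicative system to the single constraint $\prod_{i\in L}c_i=\alpha_0\alpha_1\cdots\alpha_k$ for all $k$-subsets $L$, and a swap argument forces all $c_i$ equal. Your approach avoids the case analysis and requires only one $k$-th root extraction for the whole of $f$, making the role of algebraic closedness especially transparent; the paper's approach has the small advantage of showing, case by case, exactly which elementary pieces of $\ch_1$ already lie in the image of $\ch_0$ without any root extraction (the $\beta_j$ case). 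Both arguments are elementary and of comparable length. One minor remark: your swap argument only needs $n\ge k+1$, which holds automatically since $k<n$; the stronger bound $n\ge 2k\ge k+2$ you quote is not needed here.
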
 

\begin{proof} 
Consider the set $S$ of elements $f=(\alpha_0;\alpha_1,\dots,\alpha_k;\beta_1,\dots,\beta_p)\in\ch_1$ that are equal to $1$ in all positions except one. The set $S$ generates $\ch_1$ so it is enough to show that the action of each member of $S$ can be realised via the action of an element of $\ch_0$. We look at three cases separately. 

The first case we consider is when one of the $\beta_i$ terms is not equal to $1$. Suppose that the element $\beta_j$ in position $k+1+j$ of $f$ is not equal to $1$ but that all other positions of $f$ contain the element $1$. Note that 
$f\cdot[u]=[u]$. Let $[L]$ be any other quantum Pl\"ucker coordinate in $\oqgkn$ with $[L]=[I\mid J][u]$ for some quantum minor $[I\mid J]\in\oqmkp$. Then $f\cdot[L]=(f\cdot[I\mid J])(f\cdot[u])=(f\cdot[I\mid J])[u]$ so that $f\cdot[L]=[L]$ when $j\not\in J$ and  $f\cdot[L]=\beta_j[L]$ when $j\in J$. 

By using Corollary~\ref{corollary-belonging-to-L}(ii)  we see that $f\cdot[L]=[L]$ if  $j+k\not\in L$ and that $f\cdot[L]=\beta_j[L]$ when $j+k\in L$. An element of $g\in\ch_0$ that has the same effect on quantum Pl\"ucker coordinates is the $g=(g_1,\dots,g_n)$ where $g_{j+k}=\beta_j$ while all other $g_i=1$. 

The next case that we consider is $f=(\alpha_0;\alpha_1,\dots,\alpha_k;\beta_1,\dots,\beta_p)\in\ch_1$ where $\alpha_i$ is not equal to $1$ for some $i\in\{1,\dots,k\}$ but all other entries of $f$ are equal to $1$. Again, $f\cdot[u]=[u]$. Let $[L]$ be any other quantum Pl\"ucker coordinate in $\oqgkn$ with $[L]=[I\mid J][u]$ for some quantum minor $[I\mid J]\in\oqmkp$. Then $f\cdot[L]=(f\cdot[I\mid J])(f\cdot[u])=(f\cdot[I\mid J])[u]$ so that $f\cdot[L]=[L]$ when $i\not\in I$ and  $f\cdot[L]=\alpha_i[L]$ when $i\in I$.
By using Corollary~\ref{corollary-belonging-to-L}(i)  we see that $f\cdot[L]=[L]$ if $(k+1)-i\in L$ and 
$f\cdot[L]=\alpha_i[L]$ when $(k+1)-i\not\in L$. Let $b$ be an element in $K$ such that $b^k=\alpha_i$.
Let $g=(g_1,\dots,g_n)\in\ch_0$ be such that $g_{k+1-i}= b^{-(k-1)}$ while every other entry $g_j=b$. Let 
$[L]=[l_1,\dots,l_k]$.
 Then $g\cdot[L]= g_{l_1}\dots g_{l_k}[L]$.
 
Suppose that $k+1-i\in L$. Then one of $g_{l_1},\dots,g_{l_k}$ is equal to $b^{-(k-1)}$ while the other $k-1$ are equal to $b$. Thus, $g_{l_1}\dots g_{l_k}=b^{k-1}b^{-(k-1)}=1$ and so $g\cdot[L]=[L]$. 

Now assume that $k+1-i\not\in L$. Then each of the $g_{l_j}$ is equal to $b$ and so $g\cdot[L]=b^k[L]=\alpha_i[L]$.

This shows that the action of $g$ coincides with the action of $f$, as required.

The final case to consider is $f=(\alpha_0;\alpha_1,\dots,\alpha_k;\beta_1,\dots,\beta_p)\in\ch_1$ where $\alpha_0$ is not equal to $1$ but all other entries are equal to $1$. Thus, $f\cdot[u]=\alpha_0[u]$. 
 Let $[L]$ be any other quantum Pl\"ucker coordinate in $\oqgkn$ with $[L]=[I\mid J][u]$ for some quantum minor $[I\mid J]\in\oqmkp$. Note that $f\cdot[I\mid J]=[I\mid J]$ for all quantum minors $[I\mid J]$. Hence, 
 $f\cdot[L] = (f\cdot[I\mid J])(f\cdot[u]) = [I\mid J].\alpha_0[u]=\alpha_0[L]$. Thus, $f\cdot[L] = \alpha_0[L]$ for 
all quantum minors $[L]$ of $\oqgkn$. Let $b\in K$ be such that $b^k=\alpha_0$ and set $g=(b,\dots,b)$. 
Then $g\cdot[L]=b^k[L]=\alpha_0[L]$ for all $[L]$ and the actions of $f$ and $g$ coincide. 
\end{proof} 

The action of $\ch_1$ on $\oqgkn$ is not faithful, as we will see in the next proposition. Let $\ch$ denote $\ch_1$ factored by this kernel of this action. Then $\ch$ acts faithfully on $\oqgkn$. 
The next result shows that $\ch$ is isomorphic to a torus $(K^*)^n$.

\begin{proposition}\label{proposition-hdash}
 The group $\ch$ is isomorphic to a torus $(K^*)^n$.
\end{proposition}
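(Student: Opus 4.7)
The plan is to compute the kernel $N$ of the $\ch_1$-action on $\oqgkn$ explicitly, show that $N \cong K^*$, and then exhibit a complementary subgroup of $\ch_1 = (K^*)^{n+1}$ isomorphic to $(K^*)^n$; the isomorphism $\ch = \ch_1/N \cong (K^*)^n$ will follow.

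The first step uses the formula $h \cdot [L] = \alpha_0 \alpha_I \beta_J [L]$, established in the discussion preceding Lemma~\ref{lemma-H0-is-in-H1}, where $h = (\alpha_0; \alpha_1, \ldots, \alpha_k; \beta_1, \ldots, \beta_p)$ and $[L] = [I \mid J][u]$. An element $h$ lies in $N$ precisely when $\alpha_0 \alpha_I \beta_J = 1$ for every pair $(I, J)$ arising from a quantum Pl\"ucker coordinate. Taking $[L] = [u]$, where $I = J = \emptyset$, immediately forces $\alpha_0 = 1$. Taking the Pl\"ucker coordinates corresponding via Lemma~\ref{lemma-to-and-fro} to the rank-one minors $[\{i\} \mid \{j\}]$ of $\oqmkp$ (which exist for every $i \in \{1, \ldots, k\}$ and $j \in \{1, \ldots, p\}$) yields $\alpha_i \beta_j = 1$. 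These relations force the existence of some $a \in K^*$ with $\alpha_i = a$ for all $i$ and $\beta_j = a^{-1}$ for all $j$, and conversely any such element acts trivially on every Pl\"ucker coordinate since $\alpha_I \beta_J = a^{|I|-|J|} = 1$. Hence
\[
N = \{(1; a, \ldots, a; a^{-1}, \ldots, a^{-1}) : a \in K^*\} \cong K^*.
\]

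Next I would exhibit an explicit complement. Put
\[
\mathcal{C} = \{(\alpha_0; \alpha_1, \ldots, \alpha_{k-1}, 1; \beta_1, \ldots, \beta_p) \in \ch_1\} \cong (K^*)^{1+(k-1)+p} = (K^*)^n.
\]
Then $\mathcal{C} \cap N = \{1\}$, since an element of $N$ with $\alpha_k = 1$ must be the identity, and any $h = (\alpha_0; \alpha_1, \ldots, \alpha_k; \beta_1, \ldots, \beta_p) \in \ch_1$ factors as the product
\[
(\alpha_0; \alpha_1 \alpha_k^{-1}, \ldots, \alpha_{k-1} \alpha_k^{-1}, 1; \alpha_k \beta_1, \ldots, \alpha_k \beta_p) \cdot (1; \alpha_k, \ldots, \alpha_k; \alpha_k^{-1}, \ldots, \alpha_k^{-1})
\]
with first factor in $\mathcal{C}$ and second factor in $N$. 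Thus $\ch_1 = \mathcal{C} \times N$ as abelian groups, so $\ch = \ch_1/N \cong \mathcal{C} \cong (K^*)^n$.

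The only subtle point is the kernel computation itself: one must verify that the rank-one constraints $\alpha_i \beta_j = 1$ together with $\alpha_0 = 1$ already force triviality on all Pl\"ucker coordinates, so that no further constraints arise at higher ranks. This is automatic from the multiplicativity $\alpha_I = \prod_{i \in I} \alpha_i$ and $\beta_J = \prod_{j \in J} \beta_j$, so no real obstacle arises.
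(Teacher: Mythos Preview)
Your proof is correct and follows essentially the same approach as the paper's: compute the kernel $N$ of the $\ch_1$-action, identify it as a one-parameter subgroup, and then pass to a complementary copy of $(K^*)^n$. The only cosmetic difference is that the paper normalises by setting $\beta_p=1$ (working with the generators $x_{ij}$ and $y$ on the dehomogenised side) whereas you normalise by setting $\alpha_k=1$; both choices yield valid complements to $N$ in $\ch_1$.
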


\begin{proof}
The kernel of the action of $\ch_1$ on $\oqgkn$ is the same as the kernel of the action of $\ch_1$ on 
$\oqgkn[[u]^{-1}]=\oqmkp[y^{\pm 1}]$. Using the right hand side, it is easy to check that this kernel is
$\{(1;\lambda, \dots, \lambda;\lambda^{-1},\dots,\lambda^{-1}) \mid \lambda\in\k^*\}$. Hence, choosing $\lambda=\beta_p$, we see that 
$h=(\alpha_0;\alpha_1,\dots,\alpha_k;\beta_1,\dots,\beta_p)\in\ch_1$ has the same action 
as $h':=(\alpha_0;\alpha_1\beta_p,\dots,\alpha_k\beta_p;\beta_1\beta_p^{-1},\dots,\beta_{p-1}\beta_p^{-1},1)\in\ch_1$. It is also easy to check that two distinct elements in $\ch_1$ that each have $1$ in the final place act differently on $\oqmkp[y^{\pm 1}]$, and so the claim is established. 
\end{proof}

In view of this result, we refer to the actions on $\oqgkn$ provided by $\ch$ as {\em the torus automorphisms of $\oqgkn$}.

In the case that $n=2k$, so that $k=p$, the dehomogenisation equality states that 
\[
\co_q(M(k,k))[y^{\pm 1}; \sigma] = \oqgkn[[u]^{-1}].
\]
In this case,  a simple analysis using the formula $[L]=[I\mid J][u]$ shows that the extra automorphism of $\co_q(M(k,k))$ given by transposition of the $x_{ij}$ variables extends to an automorphism of $\co_q(M(k,k))[y^{\pm 1}; \sigma] = \oqgkn[[u]^{-1}]$ 
which, when restricted to $\oqgkn$, gives rise to the diagram automorphism $\tau$ of Corollary~\ref{corollary-diagram-auto}. We will denote this automorphism by $\tau$ for each of the three algebras $\oqgkn$, $\co_q(M(k,k))$ and 
$\co_q(M(k,k))[y^{\pm 1}; \sigma] = \oqgkn[[u]^{-1}]$.

In the case where $2k=n$, let $(\alpha_0; \alpha_1, \dots , \alpha_k;\beta_1, \dots, \beta_k)\in \ch_1$. 
It is easy to check that 
$$
\tau \circ (\alpha_0; \alpha_1, \dots , \alpha_k;\beta_1, \dots, \beta_k) \circ \tau  = (\alpha_0; \beta_1, \dots, \beta_k;\alpha_1, \dots , \alpha_k)\in \ch_1,
$$
 so that $\left<\tau\right>$ acts on $\ch_1$. Also, if $(1;\lambda, \dots, \lambda;\lambda^{-1},\dots,\lambda^{-1})$ is in the kernel of the action of $\ch_1$ on $\oqgkn[[u]^{-1}]$ then 
\begin{align*}
\tau\circ(1;\lambda, \dots, \lambda;\lambda^{-1},\dots,\lambda^{-1})\circ\tau&=
(1;\lambda^{-1}, \dots, \lambda^{-1};\lambda,\dots,\lambda)\\
&=(1;\lambda^{-1}, \dots,\lambda^{-1};(\lambda^{-1})^{-1},\dots,(\lambda^{-1})^{-1})
\end{align*}
is also in the kernel of this action and so $\left<\tau\right>$ acts on $\ch$. 

\begin{definition}\label{definition-autos}
Set $\ca:=\ch$ when $2k\neq n$ and $\ca:=\ch\rtimes\langle\tau\rangle$ when $2k=n$.
\end{definition}

The analysis above shows that the elements of $\ca$ act naturally as automorphisms of $\oqgkn$ via the dehomogenisation equality. 

\begin{claim}\label{claim-all-autos} 
The automorphism group of $\oqgkn$  is $\ca$. 
\end{claim}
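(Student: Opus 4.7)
The plan is to reduce the classification of $\operatorname{Aut}(\oqgkn)$ to the known classification of $\operatorname{Aut}(\oqmkp)$ of \cite{ll1,y1} by means of the dehomogenisation equality~(\ref{equation-dehomogenisation}). Given $\varphi \in \operatorname{Aut}(\oqgkn)$, I would first exploit the $\mn$-grading, in which each quantum Pl\"ucker coordinate has degree one, together with the shape of the Pl\"ucker poset, to analyse the image of the extremal coordinate $[u]=[1,\dots,k]$. The key feature of $[u]$ is that it is a normal element (Lemma~\ref{lemma-how-u-commutes}), and a careful analysis of the normal elements of degree one in $\oqgkn$ -- combined with the $q$-commutation exponents $d(I)$ of that lemma -- should pin down $\varphi([u])$, after composing with a suitable element of $\ca$ (and, when $n=2k$, possibly with $\tau$ in order to interchange the roles of $[u]$ and $[\widehat{u}]$), to be a scalar multiple of $[u]$. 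This is the content intended for Section~\ref{section-adjusting-autos}.

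Once the adjusted automorphism $\tilde\varphi$ satisfies $\tilde\varphi([u]) = \lambda [u]$ for some $\lambda \in \k^*$, it extends uniquely to an automorphism of the Ore localisation $\oqgkn[[u]^{-1}] = \oqmkp[y^{\pm 1};\sigma]$. The next task, to be carried out in Section~\ref{section-transfer}, is to show that after further adjustment by a torus element in $\ch$, this extension preserves the subalgebra $\oqmkp$. Concretely, one tracks the $y$-degree of $\tilde\varphi(x_{ij})$ using the conversion formulae of Lemma~\ref{lemma-to-and-fro} and Corollary~\ref{corollary-belonging-to-L}, and then uses a torus adjustment to force the $y$-degree to vanish, thereby transferring $\tilde\varphi$ to a genuine automorphism of $\oqmkp$.

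At that point, the classification of $\operatorname{Aut}(\oqmkp)$ from \cite{ll1,y1}, which for generic $q$ states that every such automorphism is a row-and-column torus scaling composed (only when $k=p$) with the transpose, is available. Pulling the conclusion back through the dehomogenisation equality and unravelling the adjustments shows that the original $\varphi$ lies in $\ch$ in general, and in $\ch\rtimes\langle\tau\rangle$ when $n=2k$; this yields $\varphi \in \ca$ in either case.

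The main obstacle I anticipate is the first step. An arbitrary automorphism need not be graded, so one must first argue that $\varphi$ sends the degree-one piece (spanned by the Pl\"ucker coordinates) to itself, and then identify $[u]$ up to scalar -- and up to the $\tau$-symmetry when $n=2k$ -- among the normal elements of that space. The combinatorics of Lemma~\ref{lemma-how-u-commutes}, in which $d(I)$ is maximised precisely at $[\widehat{u}]$ and vanishes only at $[u]$, together with a careful argument ruling out unexpected normal elements of degree one, is where the bulk of the technical work in Section~\ref{section-adjusting-autos} should be concentrated.
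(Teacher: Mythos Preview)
Your overall architecture matches the paper's: adjust $\varphi$ so that it fixes $[u]$, extend to the localisation, show the extension preserves $\oqmkp$, and then invoke \cite{ll1,y1}. However, your plan for the transfer step has a genuine gap. Once you know $\tilde\varphi(y)=y$ (up to scalar), the grading $T_i=\{a:yay^{-1}=q^ia\}$ gives only $\tilde\varphi(x_{ij})\in T_1$, i.e.\ a linear combination of terms $x_{kl}\,y^b$ with $b\in\mz$. A torus element of $\ch$ merely rescales each $x_{kl}$ and $y$; it cannot collapse distinct powers of $y$ and so cannot ``force the $y$-degree to vanish'' as you propose. The paper closes this gap by \emph{also} arranging $\rho([w])=[w]$ for $[w]=[n-k+1,\dots,n]$ (via the antiautomorphism $[I]\mapsto[w_0(I)]$), so that $[I\mid J]:=[w][u]^{-1}$ is fixed as well. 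This yields a second grading $T^{(i)}=\{a:[I\mid J]\,a\,[I\mid J]^{-1}=q^{-i}a\}$, preserved by $\rho$; since $x_{ij}\in(T^{(0)}\cup T^{(1)})\cap T_1$ and one checks $(T^{(0)}\cup T^{(1)})\cap T_1\subseteq\oqmkp$ (here $k\ge2$ is essential), the conclusion $\rho(x_{ij})\in\oqmkp$ follows without any further torus adjustment.

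Two smaller remarks. First, you need not invoke $\tau$ at the adjustment stage: the paper shows directly that $\rho([u])=\lambda[u]$ for \emph{every} automorphism, with no case split at $n=2k$; the diagram automorphism only enters at the very end through Yakimov's classification of $\operatorname{Aut}(\oq(M(k,k)))$. Second, the paper does \emph{not} first prove that $\varphi$ is graded; rather it uses \cite[Proposition~4.2]{ll1} to control lowest-degree terms, the UFD property to analyse degree-one normal elements (all $d(I)$ in a normal linear combination must coincide), and then argues that $\rho([u])$ is a polynomial in $[u]$, necessarily of degree one. Your anticipated ``obstacle'' is thus handled differently from, and more delicately than, the reduction you sketch.
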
 

We will prove this claim in the following sections.


\section{Adjusting automorphisms}
\label{section-adjusting-autos}

The quantum grassmannian $\oqgkn$ carries the structure of an $\mn$-graded algebra generated in degree one when we give degree one to each of the quantum Pl\"ucker coordinates. In addition, \cite[Theorem 5.3]{llr} shows that $\oqgkn$ is a unique factorisation domain in the sense of Chatters \cite{chatters}. According to Chatters, an 
element $p$ of a noetherian domain $R$ is said to be
prime if (i) $pR = Rp$, (ii) $pR$ is a height one prime ideal of $R$, and (iii) $R/pR$ is an
integral domain. A noetherian domain R is then said to be a {\em unique factorisation
domain}  if $R$ has at least one height one prime ideal, and
every height one prime ideal is generated by a prime element.

In this section, we exploit  these properties of $\oqgkn$  in a series of results  to see that given an arbitrary automorphism of $\oqgkn$ we can essentially fix the minimal and maximal elements in the poset after allowing adjustment of the automorphism by elements of $\ch$. 

\begin{lemma}\label{degree-one-normal}
Let $A=\oplus_{i=0}^\infty\, A_i$ be a graded algebra that is a domain with $A_0$ equal to the base field and $A$ generated in degree one. 
Suppose that $a=a_1+\dots+a_m$ is a normal element with $a_i\in A_i$ for each $i$. Then 
$a_1$ is a normal element. 
\end{lemma}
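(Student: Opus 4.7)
The plan is to extract the normality of $a_1$ by comparing lowest-degree homogeneous components in the defining equations $aA = Aa$. The hypothesis that $A$ is generated in degree one is not actually needed for this argument; what matters is that $A_0 = \k$ and $A$ is a graded domain.

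First, I would take an arbitrary homogeneous element $b \in A_d$ and use normality of $a$ to write $ab = b' a$ for some $b' \in A$. Writing $b' = \sum_{j \geq 0} b'_j$ with $b'_j \in A_j$, both sides of the equation $ab = b'a$ become sums of homogeneous terms:
\[
\sum_{i=1}^m a_i b \;=\; \sum_{i=1}^m \sum_{j \geq 0} b'_j a_i,
\]
where $a_i b \in A_{i+d}$ and $b'_j a_i \in A_{j+i}$. The left-hand side has no nonzero contribution below degree $1+d$, while the lowest possibly nonzero contribution on the right has degree $j_{\min}+1$, with $j_{\min}$ the smallest index for which $b'_{j_{\min}} \neq 0$. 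Provided $a_1 b \neq 0$, matching lowest degrees forces $j_{\min} \geq d$, and then the degree $d+1$ component of the right-hand side is exactly $b'_d a_1$ (higher $a_i$'s contribute only in degrees $\geq d+2$ because $b'_{d-1} = b'_{d-2} = \cdots = 0$). So
\[
a_1 b \;=\; b'_d\, a_1 \;\in\; A a_1.
\]
If instead $a_1 b = 0$, then since $A$ is a domain and $b \neq 0$ we get $a_1 = 0$, in which case $a_1$ is trivially normal.

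Next I would extend this from homogeneous $b$ to arbitrary $b \in A$: decomposing $b = \sum b_d$ into homogeneous pieces and applying the previous step to each $b_d$ produces elements $c_d \in A$ with $a_1 b_d = c_d a_1$, hence $a_1 b = \left(\sum c_d\right) a_1 \in A a_1$. This proves the inclusion $a_1 A \subseteq A a_1$.

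Finally, by running the symmetric argument starting from $Aa = aA$ (so $ba = a b''$ for some $b''$, and comparing lowest homogeneous degrees yields $b a_1 = a_1 b''_d$), I obtain $A a_1 \subseteq a_1 A$, hence equality. The only real obstacle is the bookkeeping with homogeneous components; no deeper structure of $A$ is used beyond being a graded domain with $A_0 = \k$.
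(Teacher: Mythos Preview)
Your argument is correct and follows the same core idea as the paper: extract the lowest-degree homogeneous component from the equation expressing normality of $a$. The paper's version differs only in that it restricts at once to $b\in A_1$ (using the hypothesis that $A$ is generated in degree one to reduce to this case), whereas you handle $b\in A_d$ for arbitrary $d$ directly. Your observation that the ``generated in degree one'' hypothesis is unnecessary is correct, and your explicit treatment of both inclusions $a_1A\subseteq Aa_1$ and $Aa_1\subseteq a_1A$ is a small improvement over the paper, which only writes out one direction. Both routes amount to the same degree-counting, so there is no substantive difference in strategy.
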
 

\begin{proof} If $a_1=0$ then there is nothing to prove; so assume that $a_1\neq 0$.
As $A$ is generated in degree one, it is enough to check normality with respect to homogeneous elements of degree
one; so suppose that $b\in A_1$. Then
$ba = ba_1+\dots+ba_m = ac= (a_1+\dots +a_m)(c_0+c_1+\dots+c_t)$ for some 
$c= c_0+c_1+\dots+c_t\in A$ with $c_i\in A_i$. Comparing degree one terms gives 
$0=a_1c_0$; so $c_0=0$. The degree two terms then show that $ba_1=a_1c_1\in a_1A$, and this demonstrates that $a_1$ is normal. 
\end{proof} 

\begin{lemma} \label{prime-element}
Let $A=\oplus_{i=0}^\infty\, A_i$ be a graded algebra that is a domain with $A_0$ equal to the base field. Suppose also that $A$ is a unique factorisation domain.

Let $a$ be a homogeneous element of degree one that is normal. Then $a$ generates a prime ideal of height one.
\end{lemma}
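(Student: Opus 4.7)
The plan is to exhibit $aA$ itself as the height-one prime ideal, by first producing a height-one prime $P$ containing $a$ via a noncommutative principal ideal theorem together with the UFD hypothesis, and then showing that the prime generator of $P$ must coincide with $a$ up to a scalar because $a$ has minimal positive degree.

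A preliminary observation is that $aA$ is nonzero and proper. Nonzero is clear (one assumes $a\neq 0$, otherwise the statement is vacuous). For propriety, in a connected graded domain every unit lies in $A_0=\k^*$: indeed, if $uv=1$ and we compare the top-degree components of the product, then in a domain this top component equals the product of the top components of $u$ and $v$ and hence is nonzero, forcing both $u$ and $v$ to be homogeneous of degree $0$. Since $a\in A_1$, $a$ is not a unit, so $aA\subsetneq A$.

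The main step is to take $P$ to be a minimal prime of $A$ over the ideal $aA$. Because $a$ is normal, the noncommutative principal ideal theorem for normal elements (see, e.g., Goodearl--Warfield \cite{gw}) gives $\mathrm{ht}(P)\leq 1$, and since $A$ is a domain and $a\neq 0$ the prime $P$ is nonzero, so $\mathrm{ht}(P)=1$. The Chatters UFD hypothesis then furnishes a prime element $p\in A$ with $P=pA$, and since $a\in P$ we may write $a=pr$ for some $r\in A$. It remains to check that $aA=pA$, equivalently that $r$ is a unit.

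This is where the grading is exploited. Let $s_0\leq s_1$ be the lowest and highest degrees appearing in the homogeneous decomposition of $p$, and $t_0\leq t_1$ the corresponding extremal degrees for $r$. Because $A$ is a domain, the lowest and highest nonzero homogeneous components of $pr$ lie in degrees $s_0+t_0$ and $s_1+t_1$, respectively. Since $pr=a$ is homogeneous of degree $1$, both sums equal $1$, which forces $s_0=s_1$ and $t_0=t_1$; hence $p$ and $r$ are homogeneous with $\deg p+\deg r=1$. The case $\deg p=0$ is impossible, for then $p\in\k^*\subseteq A^\times$ would make $P=pA=A$; thus $\deg p=1$, $\deg r=0$, and $r\in A_0=\k^*$ is a unit. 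Therefore $aA=prA=pA=P$ is a height-one prime ideal. The only nontrivial external input is the noncommutative principal ideal theorem for normal elements; the rest is elementary graded-ring manipulation, and I do not anticipate any serious obstacle.
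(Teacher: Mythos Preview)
Your proof is correct and follows essentially the same route as the paper: pick a minimal prime $P$ over $aA$, invoke the noncommutative principal ideal theorem (the paper cites \cite[Theorem 4.1.11]{mcr}) to get $\mathrm{ht}(P)=1$, use the UFD hypothesis to write $P=pA$, and then compare degrees in $a=pr$ to conclude $r$ is a scalar. Your version is more explicit about why $p$ and $r$ are forced to be homogeneous and why $aA$ is proper, whereas the paper compresses this to ``by degree considerations,'' but the argument is the same.
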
 

\begin{proof}  Let $P$ be a prime that is minimal over the ideal 
$a R$. By the noncommutative principal ideal theorem \cite[Theorem 4.1.11]{mcr}, the height of $P$ is one. Hence, $P=pA$ for some normal element $p$, as $A$ is a UFD. Thus, $a$ is a (right) multiple of $p$. By degree considerations, 
$p$ must have degree one and $a$ must be a scalar multiple of $p$. Thus, $a$ and $p$ 
generate the same ideal, which is the prime ideal $P$. This establishes the claim. 
\end{proof} 


The remaining results in this section all deal with $\oqgkn$. 
As in earlier sections,  let $[u]=[1,\dots,k]$. 
\begin{lemma}\label{d(I)-the-same}
Suppose that $a=\sum a_I[I]\neq 0$, with $a_I\in\k$, is a linear combination of quantum Pl\"ucker coordinates that 
is a normal element. 
Then $d(I)$ is the same for each $I$ that has $a_I\neq 0$. 
\end{lemma}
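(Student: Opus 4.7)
The plan is to equip $R:=\oqgkn$ with a $\mz_{\geq 0}$-grading for which conjugation by $[u]$ acts as the grading automorphism, to show that the automorphism $\psi$ attached to any normal element of $R$ must respect this grading, and to finish with a one-line calculation using $r=[u]$.

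By Lemma~\ref{lemma-how-u-commutes}, the inner automorphism $\delta(r):=[u]r[u]^{-1}$ of the localisation $R[[u]^{-1}]$ sends each quantum Pl\"ucker coordinate $[I]$ to $q^{d(I)}[I]\in R$. Since $R$ is generated by the Pl\"ucker coordinates, $\delta$ restricts to an algebra automorphism of $R$, diagonalisable with eigenvalue $q^{d(I)}$ on $[I]$. Since $q$ is not a root of unity these eigenvalues are distinct for distinct values of $d$, so $R=\bigoplus_{d\geq 0}R_d$, where $R_d$ is the $q^d$-eigenspace of $\delta$. Decompose $a=\sum_d a_{(d)}$ with $a_{(d)}:=\sum_{d(I)=d}a_I[I]\in R_d$, and let $\psi$ be the automorphism of $R$ satisfying $ar=\psi(r)a$ for all $r\in R$.

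Next I would verify that $\psi$ preserves the $d$-grading. Given $r\in R_e$, write $\psi(r)=\sum_f\psi(r)_f$ in graded pieces and let $d_{\min}\leq d_{\max}$ be the smallest and largest $d$ with $a_{(d)}\neq 0$. The graded components of $ar$ lie only in degrees $[d_{\min}+e,\,d_{\max}+e]$, while those of $\psi(r)a$ lie in $[f_{\min}+d_{\min},\,f_{\max}+d_{\max}]$; matching the two extremes forces $f_{\min}=f_{\max}=e$, so $\psi(r)\in R_e$. Matching the degree-$(d_i+e)$ component of $ar=\psi(r)a$ then yields $a_{(d_i)}r=\psi(r)a_{(d_i)}$ for each $i$, and extending by linearity, each nonzero $a_{(d_i)}$ is itself normal with the same attached automorphism $\psi$.

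To finish, I would apply the normality of $a_{(d_i)}$ to $r=[u]$: on one hand $a_{(d_i)}[u]=\psi([u])a_{(d_i)}$, and on the other hand $a_{(d_i)}\in R_{d_i}$ means $\delta(a_{(d_i)})=q^{d_i}a_{(d_i)}$, so $[u]a_{(d_i)}=q^{d_i}a_{(d_i)}[u]$ and hence $a_{(d_i)}[u]=q^{-d_i}[u]a_{(d_i)}$. Cancelling the nonzero $a_{(d_i)}$ in the domain $R$ yields $\psi([u])=q^{-d_i}[u]$. Since $\psi([u])$ is a single element of $R$, this relation must hold for every $d_i$ in the support of $a$, and the non-root-of-unity hypothesis on $q$ then forces all such $d_i$ to coincide. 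The main obstacle I anticipate is the grading-preservation step for $\psi$, which relies on the extremal-degree matching argument in the third paragraph; once this is in hand, the remaining steps are essentially a single commutation identity, and in particular the unique-factorisation machinery established above is not needed for this lemma.
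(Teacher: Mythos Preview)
Your argument is correct and genuinely different from the paper's. The paper invokes the unique factorisation property (Lemma~\ref{prime-element}) to deduce that the ideal $P=a\,\oqgkn$ is prime, then computes $[u]a-q^{d(J)}a[u]\in P$ for $J$ of minimal $d$-value, factors out a $[u]$ (which is not in $P$ once $a$ is not a scalar multiple of $[u]$), and obtains a degree-one element $b\in P$ that must therefore be a scalar multiple of $a$; the missing $[J]$-term forces $b=0$. Your route instead endows $\oqgkn$ with the $\delta$-eigenspace grading, proves by an extremal-degree argument in this graded domain that the normalising automorphism $\psi$ is graded, deduces that each homogeneous component $a_{(d_i)}$ is normal with the \emph{same} $\psi$, and reads off $\psi([u])=q^{-d_i}[u]$ for every $d_i$ in the support. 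This is more elementary: it uses only that $\oqgkn$ is a domain and that $q$ is not a root of unity, and it bypasses Lemma~\ref{prime-element} and the UFD result from \cite{llr} entirely, as you note. The paper's argument, on the other hand, has the virtue of being shorter once the UFD machinery is in place, and it foreshadows the height-one prime structure that is relevant elsewhere. One minor point worth making explicit in your write-up is that the $d$-grading is multiplicative and that $\oqgkn$ is a domain, so the top and bottom graded components of a product of nonzero elements are nonzero; this is what makes your extremal-matching step in the third paragraph go through.
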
 

\begin{proof}  Since $a=\sum a_I[I]$ has degree one, $a$  is irreducible, as well as being normal. Hence, the ideal 
$P$ generated by $a$ is a height one prime ideal of $\oqgkn$, 
by Lemma~\ref{prime-element}.

If $a=a_I[I]$ for some $[I]$ then the result holds. Thus we may assume that at least two scalars $a_I$ are nonzero. In particular, $aK\neq [u]K$, and so $[u]\not\in a\oqgkn=P$. 

Let $J$ be such that $d(J)$ is as small as possible among those $d(I)$ for which  $a_I\neq 0$. We will show that $d(I)=d(J)$ for all $I$ such that $a_I\neq 0$. 

Now
\[
[u]a= 
[u]\left(\sum_{I\neq J}\, a_I[I] +a_J[J]\right)= \left(\sum_{I\neq J}\, a_Iq^{d(I)}[I] +a_Jq^{d(J)}[J]\right)[u],
\]
by using Lemma~\ref{lemma-how-u-commutes}, 
and so 
\[
\left(\sum_{I\neq J}\, a_Iq^{d(I)}[I] +a_Jq^{d(J)}[J]\right)[u]- q^{d(J)}\left(\sum_{I\neq J}\, a_I[I] +a_J[J]\right)[u]
=[u]a-q^{d(J)}a[u]
\in P;
\]
that is, 
\[
b:=\left(\sum_{I\neq J}\, (q^{d(I)}-q^{d(J)})a_I[I]\right)[u]\in P
\]

As $[u]\not\in P$ this gives 
\[
b=\left(\sum_{I\neq J}\, (q^{d(I)}-q^{d(J)})a_I[I]\right)\in P=a\oqgkn.
\]
Thus, there is a scalar $\lambda\in K$ with $b=\lambda a$. If $\lambda\neq 0$ then this is a contradiction, as 
$[J]$ occurs 
nontrivially in $\lambda a$ and not in $b$. Therefore, $b=0$ and so each $q^{d(I)}-q^{d(J)}=0$. This forces $d(I)=d(J)$, since $q$ is not a root of unity. 
\end{proof} 

In order to prove the next lemma, we need to check that we can apply \cite[Proposition 4.2]{ll1}  to the quantum grassmannian. In order to do this, we need to observe that for each quantum Pl\"ucker coordinate $[L]$ there exists a 
 quantum Pl\"ucker coordinate $[L']$ such that $[L][L'] = \alpha[L'][L]$ for some $1\neq\alpha\in K$. We use Lemma~\ref{lemma-how-u-commutes} to do this. If $[L]=[u]=[1,\dots,k]$ then set $[L']=[1,\dots,k-1,k+1]$ so that  $[L][L'] = q[L'][L]$ while if  $[L]\neq[u]$ set $[L']=[u]$ so that  $[L][L'] = q^{-d(L')}[L'][L]$.

\begin{lemma} \label{sigma-u-one-is-u}

Let $\rho$ be an automorphism of $\oqgkn$. 
Then $\rho([u])_1=\lambda [u]$, for some $\lambda\in K^*$.
\end{lemma}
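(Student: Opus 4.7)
My plan is to analyze the graded decomposition of $\rho([u])$ using Lemmas~\ref{degree-one-normal} and~\ref{d(I)-the-same}, together with \cite[Proposition 4.2]{ll1} whose hypotheses have been verified in the paragraph preceding the statement.

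Concretely, the first step is to write $\rho([u]) = a_0 + a_1 + \dots + a_m$, where $a_i$ denotes the homogeneous component of degree $i$ in the natural $\mn$-grading of $\oqgkn$. Since $\rho([u])$ is normal (because $[u]$ is normal and $\rho$ is an algebra automorphism) and $a_0 \in \k$ is central, the element $\rho([u]) - a_0 = a_1 + \dots + a_m$ is also normal, so Lemma~\ref{degree-one-normal} applies and guarantees that $a_1$ is normal. Expanding $a_1 = \sum_I a_I[I]$ as a linear combination of quantum Pl\"ucker coordinates, Lemma~\ref{d(I)-the-same} then forces all $I$ with $a_I \neq 0$ to share a common value $d(I) = d$.

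The final step is to establish $d = 0$ (so that $a_1$ is supported only on the coordinate $[u]$, since $d(I) = 0$ forces $I = u$) and that $a_1 \neq 0$ (so that $\lambda$ is indeed a unit). This is the content of \cite[Proposition 4.2]{ll1} applied with $[u]$ playing the role of the distinguished normal corner element: its hypothesis, that each generator $q$-commutes nontrivially with some other generator, is exactly what was recorded just before the lemma via Lemma~\ref{lemma-how-u-commutes}. The main obstacle is conceptual rather than computational: \emph{a priori} the degree-one normal elements of $\oqgkn$ might include scalar multiples of other ``corner'' coordinates such as $[w_0(u)]$, and what singles out $[u]$ is the asymmetry $d(I) \geq 0$ reflecting that $[u]$ is the unique minimum of the Pl\"ucker poset --- this is exactly what \cite[Proposition 4.2]{ll1} exploits in its conclusion.
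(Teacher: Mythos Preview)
Your proposal has a genuine gap in its final step. The content of \cite[Proposition~4.2]{ll1}, as used in this paper, is purely a filtration-preservation statement: under the hypothesis verified just before the lemma, any automorphism $\rho$ sends $A_{\geq m}$ into $A_{\geq m}$ (equivalently, if $a$ has no homogeneous component below degree $m$, then neither does $\rho(a)$). This yields $a_0=0$, and by applying it also to $\rho^{-1}$ one gets $a_1\neq 0$. But it says nothing about \emph{which} Pl\"ucker coordinates occur in $a_1$; in particular it does not single out $[u]$ and does not force $d=0$. Indeed $[w]=[n-k+1,\dots,n]$ is also a degree-one normal element (being $\theta([u])$ for the antiautomorphism $\theta$ used later), so normality of $a_1$ together with Lemma~\ref{d(I)-the-same} cannot by itself pin $a_1$ down to a scalar multiple of $[u]$. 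Your description of \cite[Proposition~4.2]{ll1} as ``exploiting the asymmetry $d(I)\geq 0$'' and concluding $d=0$ is a misreading of that result.

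The paper supplies the missing argument. Assuming for contradiction that $d>0$, one sets $w:=\rho^{-1}([u])$ and shows, by the same reasoning applied to $w_1$, that $w_1=\sum b_J[J]$ with a common value $d(J)=e>0$, so that $[u]w_1=q^e w_1[u]$. One then computes the degree-two component of $\rho([u]w)$ in two different ways (using \cite[Proposition~4.2]{ll1} only to discard contributions from $w_{>1}$): on one hand $\rho([u])_1\rho(w)_1 = a_1[u]=q^{-d}[u]\,a_1$, and on the other $\rho(q^e w_1[u])_2 = q^e\rho(w_1)_1\rho([u])_1 = q^e[u]\,a_1$. Equating gives $q^{d+e}=1$ with $d,e>0$, contradicting the standing hypothesis that $q$ is not a root of unity. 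This two-sided degree-two computation is the actual substance of the lemma and is not contained in \cite[Proposition~4.2]{ll1}.
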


\begin{proof} 
Note that $\rho([u])$ is a normal element of $\oqgkn$; so 
Lemma~\ref{degree-one-normal} applied to $\rho([u])$ shows that 
$\rho([u])_1$ is a normal element. If $\rho([u])_1=\lambda [u]$, for some $\lambda\in K^*$, 
then we are finished; 
so assume that $\rho([u])_1\neq \lambda [u]$. Then $\rho([u])_1 = \sum a_I[I]$ for some 
$[I]$ with each $d(I)=d>0$, by Lemma~\ref{d(I)-the-same}.

Let $w\in\oqgkn$ be such that $\rho(w)=[u]$, or, equivalently, 
$\rho^{-1}([u])=w$. Note that the degree zero term of $w$ must be zero. 
Write $w=w_1+w_{>1}$. As $\rho(w_{>1})_1=0$, by  \cite[Proposition 4.2]{ll1}, we see that $[u]=\rho(w) = \rho(w_1) + \rho(w_{>1})$; so that 
$[u]=\rho(w)_1=\rho(w_1)_1$. If $w_1=\lambda[u]$ then 
$[u]=\rho(w_1)_1=\rho(\lambda [u])_1=\lambda\rho([u])_1$ and so 
$\rho([u])_1=\lambda^{-1}[u]$, a contradiction; so we have  $w_1\neq \lambda u$, for any $\lambda\in K^*$. By applying 
Lemma~\ref{d(I)-the-same} to $w_1$, we may write
$w_1=\sum b_J[J]$ for some $[J]$ with each $d(J)=e>0$, say. Hence, 
$[u]w_1=q^ew_1[u]$.

Consider the degree two term in $\rho([u]w)$. We know that 
$\rho([u]w)=\rho([u])\rho(w)$ and that the degree zero terms of 
$\rho([u])$ and $\rho(w)$ are both zero, by \cite[Proposition 4.2]{ll1}. Hence, 
\[
\rho([u]w)_2 = \rho([u])_1\rho(w)_1 = \left(\sum a_I[I]\right)[u]= q^{-d}[u]\left(\sum a_I[I]\right).
\]
On the other hand, $\rho([u]w) = \rho([u]w_1+[u]w_{>1})=\rho([u]w_1)+\rho([u]w_{>1})$. Now, 
$[u]w_{>1}$ has no term in degree less than three. Hence, the same is true for 
$\rho([u]w_{>1})$, by \cite[Proposition 4.2]{ll1}, and, in particular, 
$\rho([u]w_{>1})_2=0$.
Thus, $\rho([u]w)_2=\rho([u]w_1)_2$. However, 
$\rho([u]w_1) = \rho(q^ew_1[u]) =q^e\rho(w_1)\rho([u])$.
Therefore, 
\[
\rho([u]w)_2= \rho([u]w_1)_2 
=\rho(q^ew_1[u])_2 =q^e\rho(w_1)_1\rho([u])_1
= q^e[u]\left(\sum a_I[I]\right)
\]

The two expressions we have obtained for $\rho([u]w)_2$ must be equal; 
so $q^{-d}[u](\sum a_I[I])=q^e[u](\sum a_I[I])$. Hence, 
$q^{-d}=q^e$; so that $q^{e+d}=1$. As $q$ is not a root of unity and 
$e+d>0$, this is a contradiction, and our lemma is proved.
\end{proof} 


\begin{lemma} \label{tau-almost-fixes-u}
Let $\rho$ be an automorphism of $\oqgkn$. 
Then $\rho([u])=\lambda [u]$, for some $\lambda\in K^*$. 
\end{lemma}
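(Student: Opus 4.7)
My plan is to show that $\rho([u])$ commutes with $[u]$ and use this to pin down $\rho([u])$ completely. By Lemma~\ref{sigma-u-one-is-u}, I may write $\rho([u]) = \lambda[u] + R$, where $R$ is supported in degrees $\geq 2$; my goal is to prove $R = 0$.

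First I establish that $[u]\rho([u]) = \rho([u])[u]$. Since $[u]$ is normal and $\rho$ is an automorphism, $\rho([u])$ is normal, so there exists $s \in \oqgkn$ with $[u]\rho([u]) = \rho([u])s$. Let $n$ be the top degree of $\rho([u])$. Comparing top degrees of the two expressions forces $s$ to have top degree $1$; comparing lowest degrees (which is $2$ on the left) forces $s_0 = 0$; and matching the degree-$2$ components yields $s_1 = [u]$. Hence $s = [u]$ and the claimed commutation holds.

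Decomposing the commutation homogeneously shows that each component $R_j$ of $R$ commutes with $[u]$. To determine the centralizer of $[u]$ in $\oqgkn$, I extend Lemma~\ref{lemma-how-u-commutes} inductively to a standard monomial $M = [I_1]\cdots[I_t]$, obtaining $[u]M = q^{d(I_1)+\cdots+d(I_t)}M[u]$. Since standard monomials form a basis of $\oqgkn$ and $q$ is not a root of unity, this forces the centralizer of $[u]$ in $\oqgkn$ to be the polynomial subalgebra $K[[u]]$. Hence each $R_j = c_j[u]^j$ for some $c_j \in K$, and
\[
\rho([u]) = [u]\cdot Q([u])
\]
for some polynomial $Q(x) \in K[x]$ with constant term $\lambda$.

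Finally, suppose for contradiction that $R \neq 0$, so $Q$ is non-constant. Since $[u]\oqgkn$ is a height-one prime (Lemma~\ref{prime-element}) and $\rho$ is an automorphism, $\rho([u])\oqgkn$ is also a height-one prime. The factorization $\rho([u]) = [u]Q([u])$ shows $\rho([u]) \in [u]\oqgkn$, so $\rho([u])\oqgkn \subseteq [u]\oqgkn$; as two height-one primes cannot strictly contain one another, these ideals coincide. Thus $[u] = \rho([u])v = [u]Q([u])v$ for some $v \in \oqgkn$, and cancelling $[u]$ in the domain $\oqgkn$ yields $Q([u])v = 1$, so $Q([u])$ is a unit. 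But the units of the graded domain $\oqgkn$ (with $A_0 = K$) are exactly the nonzero scalars, contradicting the non-constancy of $Q$. Therefore $R = 0$ and $\rho([u]) = \lambda[u]$. The main obstacle is the first step --- showing that $[u]$ and $\rho([u])$ commute --- which is the crucial insight from which the rest of the argument follows almost mechanically.
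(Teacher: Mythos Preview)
Your proof is correct and follows essentially the same route as the paper's: first establish $[u]\rho([u])=\rho([u])[u]$ via a degree-by-degree analysis of the normality equation, then use the standard monomial basis together with Lemma~\ref{lemma-how-u-commutes} to conclude that $\rho([u])$ lies in $K[[u]]$. One small gap: Lemma~\ref{sigma-u-one-is-u} only tells you that $\rho([u])_1=\lambda[u]$, not that $\rho([u])_0=0$; you need the latter to write $\rho([u])=\lambda[u]+R$ with $R$ in degrees $\geq 2$ and, crucially, to factor $\rho([u])=[u]Q([u])$ in your final step. The paper invokes \cite[Proposition~4.2]{ll1} for this, and you should do the same. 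Your endgame differs slightly from the paper's: where the paper applies the identical argument to $\rho^{-1}$ and then reads off $s=t=1$ from $[u]=\rho^{-1}\rho([u])$, you instead use that both $[u]\oqgkn$ and $\rho([u])\oqgkn$ are height-one primes to force $Q([u])$ to be a unit. Both finishes are short and clean; yours avoids rerunning the argument for $\rho^{-1}$, while the paper's avoids appealing to the prime ideal structure.
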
 

\begin{proof} 
The element $\rho([u])$ is a normal element, and the degree zero term of $\rho([u])$ is equal to $0$, by \cite[Proposition 4.2]{ll1}.  
Suppose that the degree of $\rho([u])$ is $t$ and that  $\rho([u]) = a_1+\dots+a_t$ with 
$\deg(a_i)=i$. Recall that $a_1=\lambda [u]$, for some $\lambda\in K^*$, by Lemma~\ref{sigma-u-one-is-u}. 
There is an element $r\in \oqgkn$ with 
$[u]\rho([u])=\rho([u])r$. The degree of $r$ must be one. Assume $r=r_0+r_1$ 
with $r_i$ having degree $i$. Thus, 
\[
[u](\lambda [u]+a_2+\dots+a_t)=(\lambda [u]+a_2+\dots+a_t)(r_0+r_1).
\]
As there is no term in degree one on the left hand side of the above equation, we must have 
$r_0=0$. Looking at terms in degree two, we then see that 
$\lambda [u]^2=\lambda [u]r_1$; so that $r_1=[u]$ and 
$[u]\rho([u])=\rho([u])[u]$.

Write $\rho([u])$ in terms of the standard  basis for $\oqgkn$, as in \cite{klr}, say 
$\rho([u])= \sum \alpha_i S_i$, where $\alpha_i$ is in the field and each $S_i$ is 
a standard monomial. If $S= [I_{1}]\dots[I_{m}]$ is such a standard monomial, 
then set $d(S):= \sum d(I_i)$ and note that each $d(I_i)\geq 0$ with $d(I_i)=0$ if and only 
if $[I_i]=[u]$. Then, $S[u]=q^{d(S)}[u]S$, and so $S[u]=[u]S$ if and only if $d(S)=0$ (in which case 
$S=[u]^m$ for some $m$). 

In any case, note that $[u]S$ is a standard monomial, as $[u]$ is the 
unique minimal quantum Pl\"ucker coordinate. 
Hence, 
\[
\sum \alpha_i [u]S_i =
[u]\rho([u])= \rho([u])[u] = \left(\sum \alpha_i S_i\right)[u] 
= [u] \left(\sum \alpha_i q^{d(S_i)}S_i\right) = \sum \alpha_i q^{d(S_i)}[u]S_i
\]

As the extreme left and right terms in the above display are in the standard basis, this 
forces $d(S_i)=0$ whenever $\alpha_i\neq 0$. Hence, $\rho([u])$ must be a polynomial 
in $[u]$. 

The same argument applies to the automorphism $\rho^{-1}$; so $\rho^{-1}([u])$ 
must also  be a polynomial 
in $[u]$.

Suppose that $\rho([u]) =\sum_{i=1}^t \alpha_i[u]^i$ with $\alpha_t\neq 0$, and, similarly, 
suppose that $\rho^{-1}([u])= \sum_{i=1}^s \beta_i[u]^i$ with $\beta_s\neq 0$

Then, 
\[
[u]=\rho^{-1}\rho([u]) = \rho^{-1}\left(\sum_{i=1}^t \alpha_i[u]^i\right) 
=\sum_{i=1}^t \alpha_i\rho^{-1}([u])^i = \alpha_1\beta_1[u]+\dots+\alpha_t\beta_s [u]^{st}
\]
Therefore, $s=t=1$, and $\rho([u])=\lambda [u]$, for some $\lambda\in K^*$, as required.
\end{proof} 

The above result refers to $[u]=[1,\dots,k]$, the extreme leftmost quantum Pl\"ucker coordinate. 
We want to establish a similar result for $[w]:=[n-k+1,\dots,n]$, the extreme rightmost 
quantum Pl\"ucker coordinate. In order to do this we employ an antiautomorphism of 
$\oqgkn$ which we now describe. 

Let $w_0$ denote the longest element on the symmetric group on $n$ elements; that is, 
$w_0(i)=n+1-i$. 
The discussion immediately before Proposition 2.12 of \cite{ag} shows that the map 
$\theta:\oqgkn\longrightarrow\oqgkn$ given by $\theta([I]) = [w_0(I)]$ for each quantum Pl\"ucker 
coordinate $[I]$ is an antiautomorphism. Note that 
$\theta([u]) =\theta([1,\dots,k])=[n+1-k,\dots, n]=[w]$.

\begin{corollary}\label{adjust-u}%
Let $\rho$ be an automorphism of $\oqgkn$. Then there exists $h\in\ch$ 
such that $(h\circ\rho)([u])=u$ and $(h\circ\rho)([w])=[w]$. 
\end{corollary}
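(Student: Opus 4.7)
The plan is to apply Lemma~\ref{tau-almost-fixes-u} twice, the second time to a conjugate of $\rho$ by the antiautomorphism $\theta$, and then to pick a single element of $\ch$ that rescales both $[u]$ and $[w]$ by the required amounts.

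First, a direct application of Lemma~\ref{tau-almost-fixes-u} to $\rho$ yields a scalar $\lambda\in K^*$ with $\rho([u])=\lambda[u]$. To get an analogous statement for $[w]$, I would use $\theta$. Since $\theta$ is an involution (because $w_0^2=\mathrm{id}$, so $\theta^2([I])=[w_0^2(I)]=[I]$) and an antiautomorphism, a short check shows that $\theta\rho\theta$ is an \emph{automorphism} of $\oqgkn$. Applying Lemma~\ref{tau-almost-fixes-u} to this automorphism gives $(\theta\rho\theta)([u])=\mu[u]$ for some $\mu\in K^*$. Now apply $\theta$ to both sides and use $\theta([u])=[w]$ together with $\theta^2=\mathrm{id}$ to conclude $\rho([w])=\mu[w]$.

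Next, I would compute the action of a general $h=(\alpha_0;\alpha_1,\dots,\alpha_k;\beta_1,\dots,\beta_p)\in\ch_1$ on $[u]$ and $[w]$. The formula $h\cdot[u]=\alpha_0[u]$ is immediate. For $[w]=[n-k+1,\dots,n]$ note that $(L_{\leq k},L_{>k})=(\emptyset,\{n-k+1,\dots,n\})$, so Lemma~\ref{lemma-to-and-fro}(ii) gives
\[
[w]=[\{1,\dots,k\}\mid\{p-k+1,\dots,p\}]\,[u],
\]
and consequently
\[
h\cdot[w]=\alpha_0\,\alpha_1\cdots\alpha_k\,\beta_{p-k+1}\cdots\beta_p\,[w].
\]

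Finally, to finish I would solve
\[
\lambda\alpha_0=1,\qquad \mu\,\alpha_0\,\alpha_1\cdots\alpha_k\,\beta_{p-k+1}\cdots\beta_p=1.
\]
Both conditions are satisfied by, for example, taking $\alpha_0=\lambda^{-1}$, $\alpha_1=\mu^{-1}\lambda$, and all remaining $\alpha_i$ and $\beta_j$ equal to $1$. This gives an $h\in\ch_1$, hence an element of $\ch$, for which $(h\circ\rho)([u])=[u]$ and $(h\circ\rho)([w])=[w]$.

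The only nontrivial point in the argument is that $\theta\rho\theta$ is genuinely an automorphism (not merely an antiautomorphism), so that Lemma~\ref{tau-almost-fixes-u} can be applied to it; everything else is a direct computation using the formulas already established.
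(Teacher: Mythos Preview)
Your proof is correct and follows essentially the same approach as the paper: both use Lemma~\ref{tau-almost-fixes-u} directly for $[u]$, apply it to the conjugate $\theta\rho\theta$ (using $\theta^2=\mathrm{id}$) to handle $[w]$, and then pick a torus element to rescale. The only minor difference is that the paper chooses $h=(\lambda^{-1},1,\dots,1,\mu^{-1})\in\ch_0$ and then invokes Lemma~\ref{lemma-H0-is-in-H1} to pass to $\ch$, whereas you work directly in $\ch_1$ via the formula $h\cdot[w]=\alpha_0\alpha_1\cdots\alpha_k\beta_{p-k+1}\cdots\beta_p[w]$ and solve explicitly; both are equally valid and equally short.
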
 

\begin{proof} 
The map $\theta\rho\theta$ is an automorphism of $\oqgkn$. By Lemma~\ref{tau-almost-fixes-u}, 
there is an element $\mu\in K^*$ such that $\theta\rho\theta([u])=\mu [u]$. 
Apply $\theta$ to both sides of this equality to obtain $\rho\theta([u])=\mu\theta([u])$; 
that is, $\rho([w])=\mu [w]$. We also know that $\rho([u])=\lambda [u]$ for some $\lambda\in K^*$. 
Set $h:=(\lambda^{-1},1,\dots,1,\mu^{-1})\in\ch_0$. Then 
$(h\circ\rho)([u])=[u]$ and $(h\circ\rho)([w])=[w]$. Lemma~\ref{lemma-H0-is-in-H1} shows that the action of an element of $\ch_0$ is realised by the action of an element of $\ch_1$ and hence of $\ch$, so the result follows.
\end{proof}

In what follows, we will often replace the original automorphism $\rho$ by $h\circ\rho$ 
so that we may assume that $\rho([u])=[u]$ and $\rho([w])=[w]$ in calculations. 

\section{Transfer to quantum matrices} \label{section-transfer}

Recall from the discussion in Section~\ref{section-basic-definitions} that when discussing  $\oqgkn$ we are assuming that $1<k$ and that $2k\leq n$. 

Let $\rho$ be an automorphism of $\oqgkn$. Set $[u]=[1\dots k]$ and $[w]=[n-k+1,\dots,n]$. 
By using Corollary~\ref{adjust-u}, at the expense of 
adjusting $\rho$ by an element of $\ch$,  
we can, and will,  assume that $\rho([u])=[u]$ and $\rho([w])=[w]$. The automorphism 
$\rho$ now extends to $\oqgkn[[u]^{-1}]$, and so to $\co_q(M(k,p))[y^{\pm1};\sigma] $, by the dehomogenisation equality of Section~\ref{section-dehomogenisation}, and we know that $\rho(y)=y$.

We will show that such a $\rho$ sends $\co_q(M(k,p))$ to itself. Once we have done 
this, we will know how 
$\rho$ acts on each quantum minor in $\co_q(M(k,p))$ as we know the automorphism group of $\co_q(M(k,p))$. We can then calculate how $\rho$ acts on arbitrary quantum Pl\"ucker coordinates of $\oqgkn$, by using the formulae of Lemma~\ref{lemma-to-and-fro}.

From the discussion in Section~\ref{section-dehomogenisation}, we know that the quantum matrix generators 
$x_{ij}$ are defined by 
\[
x_{ij}:=[1,\dots,\widehat{k+1-i},\dots k, j+k][u]^{-1},
\]
for $1\leq i\leq k$ and $1\leq j\leq p$.
In the following calculations, all quantum minors $[-\mid-]$ are formed from the generators $x_{ij}$ of $\co_q(M(k,p))$. 


As $2k\leq n$, we know that $k\leq n-k=p$.
In this case, the quantum minor 
$[I\mid J]:= [1,\dots, k\mid p+1-k, \dots, p]$ is defined (we are using all the rows 
of $\co_q(M(k,p))$ and the last 
$k$ columns (and there are at least $k$ columns, by the assumption)). 

Now, Lemma~\ref{lemma-to-and-fro} shows that 
$[I\mid J] = [p+1\dots n][1\dots k]^{-1}=[w][u]^{-1}$, and it follows this  that $\rho([I\mid J])=[I\mid J]$ for any element $\rho\in \ch$ such that $\rho([u]) = [u]$ and $\rho([w]) = [w]$.

 We can calculate how 
$[I\mid J]$ commutes with $[u]=[1\dots k]$. Note that $k<n-k+1$, as $2k\leq n$. 
Thus the index sets $\{1,\dots, k\}$ and $\{p+1,\dots, n\}$ do not overlap, and 
\[
[u][I\mid J]= [u][w][u]^{-1} = q^k[w][u][u]^{-1}
= q^k[w][u]^{-1}[u] = q^k[I\mid J]\,[u], 
\]
where the second equality comes from Lemma~\ref{lemma-how-u-commutes}.  

Also, we know how $[I\mid J]$ commutes with the $x_{ij}$ by the following lemma.

\begin{lemma} \label{lemma-commutation-rules}
 (i) If $j\geq n+1-2k=p+1-k$ then 
$x_{ij}[I\mid J]=[I\mid J]\,x_{ij}$.\newline (ii) If $j<n+1-2k=p+1-k$ then 
$x_{ij}[I\mid J]=q[I\mid J]\,x_{ij}$.
\end{lemma}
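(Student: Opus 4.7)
The plan is to reduce both parts of the lemma to a single commutation computation in $\oqgkn$ by passing through the dehomogenisation equality. Using the identity $[I\mid J] = [w][u]^{-1}$ (already derived just before the lemma as a case of Lemma~\ref{lemma-to-and-fro}(i)) together with the definition $x_{ij} = P_{ij}[u]^{-1}$, where $P_{ij} := [1,\dots,\widehat{k+1-i},\dots,k,j+k]$, the task becomes that of computing $P_{ij}[w]$ in terms of $[w]P_{ij}$. Combined with the identities $[u]^{-1}[w] = q^{-k}[w][u]^{-1}$ (from $d(w) = k$) and $[u]^{-1}P_{ij} = q^{-1}P_{ij}[u]^{-1}$ (from $d(P_{ij}) = 1$), both of which follow immediately from Lemma~\ref{lemma-how-u-commutes}, the lemma will then follow by bookkeeping of powers of $q$.

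The main technical step is to establish the following ``dual'' of Lemma~\ref{lemma-how-u-commutes}: for every quantum Pl\"ucker coordinate $[L]$,
\[
[L][w] = q^{d'(L)}[w][L], \qquad \text{where } d'(L) := |L \setminus (L \cap w)|.
\]
This will be obtained by applying the antiautomorphism $\theta$ (defined just before Corollary~\ref{adjust-u}, with $\theta([I]) = [w_0(I)]$, and in particular $\theta([u]) = [w]$) to $[u][L] = q^{d(L)}[L][u]$, reversing the order of factors as $\theta$ is an antihomomorphism, and reindexing $L \mapsto w_0(L)$. The only combinatorial point is that $|w_0(L)\cap u| = |L \cap w|$, so that $d(w_0(L)) = k - |L \cap w| = d'(L)$.

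Once the dual formula is available, $d'(P_{ij})$ is computed directly: the elements of $P_{ij}$ other than $j+k$ all lie in $\{1,\dots,k\}$, which is disjoint from $w = \{n-k+1,\dots,n\}$, so $P_{ij} \cap w$ equals $\{j+k\}$ when $j+k \in w$ (i.e.\ when $j \geq p+1-k$) and is empty otherwise. Hence $d'(P_{ij}) = k-1$ in case (i) and $d'(P_{ij}) = k$ in case (ii). Substituting into
\[
x_{ij}[I\mid J] = P_{ij}[u]^{-1}[w][u]^{-1} = q^{-k+d'(P_{ij})}[w]P_{ij}[u]^{-2}
\]
and
\[
[I\mid J]x_{ij} = [w][u]^{-1}P_{ij}[u]^{-1} = q^{-1}[w]P_{ij}[u]^{-2},
\]
and comparing the exponents of $q$ produces $x_{ij}[I\mid J] = [I\mid J]x_{ij}$ when $d'(P_{ij}) = k-1$ and $x_{ij}[I\mid J] = q[I\mid J]x_{ij}$ when $d'(P_{ij}) = k$, as required.

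The only non-routine step is the derivation of the dual commutation formula; everything else is mechanical manipulation of powers of $q$ in the localisation $\oqgkn[[u]^{-1}]$.
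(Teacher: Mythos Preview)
Your proof is correct, and it takes a genuinely different route from the paper's. The paper argues entirely inside $\oq(M(k,p))$: for (i) it observes that when $j\geq p+1-k$ the generator $x_{ij}$ lies in the $k\times k$ subalgebra whose quantum determinant is $[I\mid J]$, hence commutes with it by centrality of the quantum determinant; for (ii) it simply cites an identity from \cite{gl}. You instead pull everything back through the dehomogenisation equality to $\oqgkn[[u]^{-1}]$, derive a ``$[w]$-commutation'' formula $[L][w]=q^{d'(L)}[w][L]$ as the $\theta$-image of Lemma~\ref{lemma-how-u-commutes}, and then read off both cases from the value of $d'(P_{ij})$.

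Your approach is more self-contained relative to the paper: it uses only Lemma~\ref{lemma-how-u-commutes} and the antiautomorphism $\theta$ already introduced in Section~\ref{section-adjusting-autos}, avoiding the external citation for part (ii). It also gives a uniform treatment of both cases. The paper's argument, on the other hand, is shorter and stays in the quantum matrix algebra where the lemma is stated, making the mechanism (centrality in case (i), a row/column commutation rule in case (ii)) more transparent from the quantum-matrix viewpoint.
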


\begin{proof} 
(i) In this case, $x_{ij}$ is in the quantum matrix algebra determined by the rows from $I$ and the 
columns from $J$ and $[I\mid J]$ is the quantum determinant of this algebra, so the claim follows as $[I\mid J]$ is 
central in this algebra.\\
(ii) This result is obtained from the first equation in E(1.3c) in \cite[Section 1.3]{gl}. \end{proof}

We define two gradings on $T:=
\co_q(M(k,p))[y^{\pm 1}; \sigma] = \oqgkn[[u]^{-1}]$ which grade $T$ according to how elements commute with $y=u$ and $[I\mid J]=[1,\dots, k\mid p+1-k, \dots, p]$.

First, set $T_i:=\{a\in T\mid yay^{-1}=q^ia\}$.

\begin{lemma}
(i)  $T= \bigoplus_{i=1}^\infty\,T_i$,\\
(ii) $\rho(T_i)=T_i$.
\end{lemma}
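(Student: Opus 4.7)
The plan is to exploit the skew-Laurent description $T = \oqmkp[y^{\pm 1}; \sigma]$ together with the standard $\mn$-grading on $\oqmkp$ (in which each $x_{ij}$ has degree one) to identify the eigenspaces $T_i$ very explicitly, and then use the fact that $\rho$ has been normalised to fix $y$ to deduce (ii).

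For part (i), the first step is to observe how conjugation by $y$ interacts with the grading on $\oqmkp$. If $f \in \oqmkp$ is homogeneous of degree $d$ then, because $\sigma(x_{ij}) = qx_{ij}$ extends multiplicatively, we have $\sigma(f) = q^d f$, and hence $yfy^{-1} = \sigma(f) = q^d f$. Since $y$ commutes with every power of itself, it follows that every monomial $f y^j$ with $f \in (\oqmkp)_d$ lies in $T_d$. Every element of $T$ admits a unique expression $\sum_j f_j y^j$ with $f_j \in \oqmkp$, and each $f_j$ decomposes uniquely into its homogeneous components; so $T$ is the sum of the subspaces $T_d$ for $d \geq 0$ (the indexing in the statement should read $i = 0$, since $1 \in T_0$). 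The sum is direct because the $T_d$ are eigenspaces of the single linear operator ``conjugation by $y$'' and the eigenvalues $q^d$ are pairwise distinct, which is where the hypothesis that $q$ is not a root of unity is used.

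For part (ii), I would use that $\rho$ is an algebra automorphism with $\rho(y) = y$ (as arranged at the start of Section~\ref{section-transfer} after Corollary~\ref{adjust-u} and the dehomogenisation equality). Then for any $a \in T_i$,
\[
y\rho(a)y^{-1} = \rho(y)\rho(a)\rho(y)^{-1} = \rho(yay^{-1}) = \rho(q^i a) = q^i \rho(a),
\]
so $\rho(a) \in T_i$, giving $\rho(T_i) \subseteq T_i$. Applying the same argument to $\rho^{-1}$ yields the reverse inclusion, hence equality.

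There is no genuine obstacle here; the lemma is essentially a bookkeeping consequence of the skew-Laurent structure together with the crucial fact that $\rho$ has been adjusted to fix $y$. The only subtle point worth flagging is the directness of the decomposition in (i), which needs $q$ to not be a root of unity so that the eigenvalues $\{q^d : d \geq 0\}$ are distinct.
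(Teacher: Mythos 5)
Your proof is correct and follows essentially the same route as the paper: both rest on the monomial basis $x_{11}^{a_{11}}\cdots x_{kp}^{a_{kp}}y^{s}$ of $T$, the relation $yx_{ij}y^{-1}=qx_{ij}$, and the fact that $\rho(y)=y$ for part (ii). Your observation that the index should start at $i=0$ (since $y^{\pm 1}\in T_0$) correctly identifies a typo in the statement, and your explicit appeal to $q$ not being a root of unity for directness makes precise a point the paper leaves implicit.
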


\begin{proof}
Note that $T$ is generated by $y^{\pm 1}$ and the $x_{ij}$, and that $y^{\pm 1}\in T_0$, while $x_{ij}\in T_1$, as 
$yx_{ij}=qx_{ij}y$. As $\oqmkp$ is an iterated Ore extension with the elements $x_{ij}$ added lexicographically, the elements of the form $x_{11}^{a_{11}}\dots x_{kp}^{a_{kp}}y^s$ with $a_{ij}\geq 0$ and $s\in\mz$ form a basis for $T$. Part (i) now follows as $yx_{ij}y^{-1}=qx_{ij}$. Part (ii) follows from the fact that $\rho(y)=y$.
\end{proof} 

Next, set $T^{(i)}:=\{a\in T\mid [I\mid J]a[I\mid J]^{-1}=q^{-i}a\}$. The commutation rules given in Lemma~\ref{lemma-commutation-rules} show that $x_{ij}\in T^{(0)}\cup T^{(1)}$. Also, note that 
$y[I\mid J] =[u][I\mid J]=q^k[I\mid J][u] =  q^k[I\mid J]y$, so $y=u\in T^{(k)}$
 and $y^{-1}\in T^{(-k)}$.
 
\begin{lemma}
 (i) $T= \bigoplus_{i\in\mz}\,T^{(i)}$,\\
(ii) $\rho(T^{(i)})=T^{(i)}$.
\end{lemma}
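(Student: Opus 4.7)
My plan is to mirror the structure of the preceding lemma concerning $T_i$, replacing $y$ by $[I\mid J]$ throughout. The two ingredients are the commutation relations collected just before the statement and the fact that $\rho$ fixes $[I\mid J]$.

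First I would note that under the adjustment made in this section we have $\rho([u])=[u]$ and $\rho([w])=[w]$, and since $[I\mid J]=[w][u]^{-1}$ inside $T$, it follows that $\rho([I\mid J])=[I\mid J]$. This is the key fact driving part (ii): conjugation by $[I\mid J]$ commutes with $\rho$, so each eigenspace $T^{(i)}$ is preserved. Concretely, for $a\in T^{(i)}$,
\[
[I\mid J]\rho(a)[I\mid J]^{-1}=\rho([I\mid J]\,a\,[I\mid J]^{-1})=\rho(q^{-i}a)=q^{-i}\rho(a),
\]
so $\rho(T^{(i)})\subseteq T^{(i)}$; applying the same reasoning to $\rho^{-1}$ yields equality.

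For part (i), I would exploit the fact that $T$ is spanned by the basis of monomials $x_{11}^{a_{11}}\cdots x_{kp}^{a_{kp}}y^{s}$ (with $a_{ij}\geq 0$ and $s\in\mz$) coming from the iterated Ore extension structure. By Lemma~\ref{lemma-commutation-rules}, each $x_{ij}$ is a $[I\mid J]$-eigenvector under conjugation: $x_{ij}\in T^{(0)}$ when $j\geq p+1-k$ and $x_{ij}\in T^{(1)}$ otherwise. Moreover $y\in T^{(k)}$ (since $y[I\mid J]=q^{k}[I\mid J]y$ gives $[I\mid J]y[I\mid J]^{-1}=q^{-k}y$) and $y^{-1}\in T^{(-k)}$. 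Products of eigenvectors are eigenvectors with eigenvalue the product of the factors' eigenvalues, so every basis monomial lies in some $T^{(i)}$. Grouping the basis monomials by the induced eigenvalue yields the decomposition $T=\bigoplus_{i\in\mz}T^{(i)}$, with the sum direct because the $T^{(i)}$ are distinct eigenspaces of a fixed automorphism (namely conjugation by $[I\mid J]$) acting on the vector space $T$.

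No step presents a real obstacle; both assertions are essentially formal once one observes that $\rho$ fixes $[I\mid J]$. The only mild subtlety is keeping track of the sign in the exponent of $q$ (the defining relation has $q^{-i}$ rather than $q^{i}$, so $y\in T^{(k)}$ rather than $T^{(-k)}$), and noting that negative powers of $y$ genuinely contribute negative eigenvalues so that the direct sum is indexed by all of $\mz$.
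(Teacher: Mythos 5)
Your proof is correct and follows essentially the same route as the paper: the paper proves part (i) exactly as for the $T_i$-grading, using the monomial basis from the iterated Ore extension structure together with the eigenvector properties of the $x_{ij}$ and $y^{\pm 1}$ under conjugation by $[I\mid J]$, and deduces part (ii) from $\rho([I\mid J])=[I\mid J]$. Your write-up merely supplies the details that the paper leaves implicit.
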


\begin{proof}
Part (i) is proved as in the previous lemma, and Part (ii) follows from the fact that $\rho([I\mid J])=[I\mid J]$.\\
\end{proof} 

\begin{lemma}
$\left(T^{(0)}\cup T^{(1)}\right)\cap T_1\subseteq\oqmkp$.
\end{lemma}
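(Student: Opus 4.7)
The plan is to use the PBW-style basis of $T = \oqmkp[y^{\pm 1};\sigma]$ and to observe that both gradings $T = \bigoplus_i T_i$ and $T = \bigoplus_e T^{(e)}$ are diagonal in it, so that membership in a single homogeneous component can be read off term-by-term.

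Because $\oqmkp$ is an iterated Ore extension, the ordered monomials $m_{\mathbf{a},s} := x_{11}^{a_{11}}\cdots x_{kp}^{a_{kp}}\,y^s$ with $a_{ij}\in\mn$ and $s\in\mz$ form a $\mk$-basis of $T$. From $yx_{ij} = qx_{ij}y$ one reads off $m_{\mathbf{a},s}\in T_{|\mathbf{a}|}$, where $|\mathbf{a}| := \sum a_{ij}$; the calculation $y[I\mid J] = q^k[I\mid J]y$ already performed in the text gives $y\in T^{(k)}$, and Lemma~\ref{lemma-commutation-rules} gives $x_{ij}\in T^{(\epsilon_{ij})}$ with $\epsilon_{ij} = 1$ if $j<p+1-k$ and $\epsilon_{ij} = 0$ otherwise, so $m_{\mathbf{a},s}\in T^{(ks+\sum_{j<p+1-k}a_{ij})}$. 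Because $q$ is not a root of unity, distinct weights give distinct eigenvalues, and so any element lying in a single $T_i$ or $T^{(e)}$ has all coefficients on basis monomials of other weight equal to zero.

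With this in hand, take $a\in(T^{(0)}\cup T^{(1)})\cap T_1$. The $x$-degree-one basis monomials are exactly $x_{ij}\,y^s$, so $a = \sum_{i,j,s}c_{ijs}\,x_{ij}\,y^s$ for some scalars $c_{ijs}\in\mk$. If $a\in T^{(0)}$ then $c_{ijs} = 0$ whenever $\epsilon_{ij}+ks\neq 0$; if instead $a\in T^{(1)}$, the same holds for $\epsilon_{ij}+ks\neq 1$. The standing assumption $k > 1$ gives $k\geq 2$, and combined with $\epsilon_{ij}\in\{0,1\}$ the condition $\epsilon_{ij}+ks\in\{0,1\}$ forces $s = 0$. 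Hence every surviving term is a plain $x_{ij}$ and $a\in\oqmkp$.

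The only delicate point is the first paragraph, and it is really just bookkeeping: each PBW monomial is simultaneously an eigenvector for conjugation by $y$ and by $[I\mid J]$, and the non-root-of-unity hypothesis on $q$ is what separates the weight spaces. The arithmetic conclusion in the final step is the place where the hypothesis $k>1$ is essential; for $k=1$ the equation $\epsilon_{ij}+s\in\{0,1\}$ admits $s = -1, 0, 1$ and the claimed inclusion would fail, which fits with the fact that $\oq(G(1,n))$ is excluded from our setup.
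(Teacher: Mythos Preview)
Your proof is correct and follows essentially the same route as the paper: expand in the PBW basis $x_{11}^{a_{11}}\cdots x_{kp}^{a_{kp}}y^s$, use $T_1$ to cut down to monomials $x_{ij}y^s$, then use the $T^{(e)}$-weight $\epsilon_{ij}+ks\in\{0,1\}$ together with $k\geq 2$ to force $s=0$. Your added remarks about the role of the non-root-of-unity hypothesis and the failure at $k=1$ are accurate but not needed for the argument itself.
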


\begin{proof}
Suppose that $a\in\left(T^{(0)}\cup T^{(1)}\right)\cap T_1$. Then 
$a$ is a sum of scalar multiples of monomials of the form $m:=x_{11}^{a_{11}}\dots x_{kp}^{a_{kp}}y^s$ with each $a_{ij}\geq 0$ and $s\in\mz$. Such a monomial is in $T_f$, where 
$f=\sum a_{ij}$ and so we must have $\sum a_{ij}=1$, as $a\in T_1$. Thus, only one $x_{kl}$ can occur in each monomial and $a$ is a sum of scalar multiples of monomials of the form $m:=
x_{kl}y^b$. Such an $m$ is in $T^{(e+bk)}$, where $e=0$ or $e=1$. As each $m$ must be in 
$T^{(0)}\cup T^{(1)}$ we must have $e+bk= 0$ or $e+bk=1$. The only possible solutions to these restrictions is that $b=0$, as $k\geq 2$. Hence, $a$ is a sum of scalar multiples of monomials of the form $x_{kl}$ which means that $a\in \oq(M(k,p))$.
\end{proof} 

\begin{theorem}\label{theorem-reduced-auto}
Let $\rho$ be an automorphism of $\oqgkn$ with $1<k$ and $2k\leq n$ that satisfies 
$\rho([u])=[u]$ and $\rho([w])=[w]$. Then 
$\rho$ extends naturally to an automorphism of $T=\oqgkn[[u]^{-1}]=\oq(M(k,p))[y^{\pm 1};\sigma]$ 
such that $\rho(y)=y$ and $\rho(\oq(M(k,p)))=\oq(M(k,p))$.
\end{theorem}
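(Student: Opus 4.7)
The plan is to extend $\rho$ to $T$ first, then combine the two gradings that have already been set up on $T$ to force every generator $x_{ij}$ of $\oq(M(k,p))$ into $\oq(M(k,p))$. Since $[u]$ is a normal element of $\oqgkn$ and $\rho([u])=[u]$, the automorphism $\rho$ extends canonically to $T=\oqgkn[[u]^{-1}]$ by declaring $\rho([u]^{-1})=[u]^{-1}$. Under the dehomogenisation equality, $y$ is identified with $[u]$, so this extension satisfies $\rho(y)=y$, which gives the first part of the conclusion.

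Next I would check that $\rho$ preserves each component of both gradings. Preservation of $T_i=\{a\in T:yay^{-1}=q^ia\}$ is immediate from $\rho(y)=y$: applying $\rho$ to the defining relation $yay^{-1}=q^ia$ yields the same relation for $\rho(a)$. For the grading by $T^{(i)}$, the crucial observation is that $\rho([I\mid J])=[I\mid J]$, where $[I\mid J]=[1,\dots,k\mid p+1-k,\dots,p]$. This is because Lemma~\ref{lemma-to-and-fro} gives $[I\mid J]=[w][u]^{-1}$, and by hypothesis $\rho([u])=[u]$ and $\rho([w])=[w]$. Conjugating by $[I\mid J]$ and applying $\rho$ then shows $\rho(T^{(i)})=T^{(i)}$.

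With both gradings preserved, I would then look at the generators $x_{ij}$ of $\oq(M(k,p))$. From the dehomogenisation we have $yx_{ij}=qx_{ij}y$, so $x_{ij}\in T_1$; from Lemma~\ref{lemma-commutation-rules} we have $x_{ij}\in T^{(0)}\cup T^{(1)}$. Hence $\rho(x_{ij})\in T_1\cap(T^{(0)}\cup T^{(1)})$, and the final preparatory lemma identifies this intersection as a subset of $\oq(M(k,p))$. Thus $\rho(x_{ij})\in\oq(M(k,p))$ for every $i,j$, and since these elements generate $\oq(M(k,p))$ as a subalgebra of $T$, we obtain $\rho(\oq(M(k,p)))\subseteq\oq(M(k,p))$. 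Applying the same argument to $\rho^{-1}$, which also satisfies $\rho^{-1}([u])=[u]$ and $\rho^{-1}([w])=[w]$, yields the reverse inclusion and hence equality.

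The substantial work has really been done in the three lemmas preceding the theorem: the theorem itself is essentially a bookkeeping exercise once one notices that $\rho([I\mid J])=[I\mid J]$ follows from the two fixed-point conditions on $[u]$ and $[w]$. The only subtle point to verify carefully is that the canonical extension of $\rho$ to the Ore localisation at $[u]$ really does correspond, under the dehomogenisation isomorphism, to an automorphism of the skew Laurent extension fixing $y$; this is just the observation that the dehomogenisation equality is an equality of subrings of $T$, not merely an abstract isomorphism, so no identification issues arise.
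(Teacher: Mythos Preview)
Your proof is correct and follows essentially the same approach as the paper: extend $\rho$ to $T$ via $\rho([u])=[u]$, observe that $\rho$ preserves both gradings because it fixes $y=[u]$ and $[I\mid J]=[w][u]^{-1}$, and then use the containment $(T^{(0)}\cup T^{(1)})\cap T_1\subseteq\oq(M(k,p))$ to conclude that each $\rho(x_{ij})$ lands in $\oq(M(k,p))$. Your explicit appeal to $\rho^{-1}$ to upgrade the inclusion to an equality is a point the paper leaves implicit, so your write-up is in fact slightly more complete on that step.
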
 

\begin{proof}
We know that $\rho(y)=y$ and so only need to show that $\rho(\oq(M(k,p)))=\oq(M(k,p))$. To do this, it is sufficient to prove that $\rho(x_{ij})\in \oq(M(k,p))$ for each generator $x_{ij}$.

Note that $x_{ij}\in\left(T^{(0)}\cup T^{(1)}\right)\cap T_1$ for each $i,j$, and so $\rho(x_{ij})\in\left(T^{(0)}\cup T^{(1)}\right)\cap T_1$ for each $i,j$. 

Hence, $\rho(x_{ij})\in\oqmkp$ by the previous lemma.
\end{proof} 



\section{The automorphism group of $\oqgkn$}\label{section-main-result}

In Definition~\ref{definition-autos} and Claim~\ref{claim-all-autos} 
 we  identified a group of automorphisms of $\oqgkn$ that we claimed would give us all the automorphisms. We are now in a position to justify this claim.  
  
 Recall from Definition~\ref{definition-autos} that  $\ca=\ch$ when $2k\neq n$ and $\ca:=\ch\rtimes\langle\tau\rangle$ when $2k=n$.

\begin{theorem}\label{theorem-main} 
The automorphism group ${\rm Aut}(\oqgkn)$ of $\oqgkn$ is isomorphic  to $\ca$.
\end{theorem}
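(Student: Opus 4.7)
The plan is to assemble the machinery developed in the previous sections and then invoke the known classification of automorphisms of quantum matrix algebras. One inclusion, $\ca\subseteq\mathrm{Aut}(\oqgkn)$, is already established in Section~\ref{section-dhom-autos} together with Corollary~\ref{corollary-diagram-auto}, so the content is the reverse inclusion. Given an arbitrary $\rho\in\mathrm{Aut}(\oqgkn)$, the first step is to apply Corollary~\ref{adjust-u} to find $h\in\ch$ such that $\rho':=h\circ\rho$ satisfies $\rho'([u])=[u]$ and $\rho'([w])=[w]$. Theorem~\ref{theorem-reduced-auto} then tells us that $\rho'$ extends to an automorphism of $T=\oqgkn[[u]^{-1}]=\oq(M(k,p))[y^{\pm 1};\sigma]$ with $\rho'(y)=y$ and $\rho'\bigl(\oq(M(k,p))\bigr)=\oq(M(k,p))$. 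Writing $\bar\rho'$ for the restriction to $\oq(M(k,p))$, the problem is reduced to understanding $\bar\rho'$.

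The second step is to invoke the classification of $\mathrm{Aut}(\oq(M(k,p)))$ from \cite{ll1,y1}. When $k<p$ (that is, $2k<n$) this group consists only of row and column scalings, forming the torus $(K^*)^{k+p}$ modulo the diagonal. When $k=p$ (that is, $2k=n$), the transpose automorphism generates an additional $\mathbb{Z}/2$-factor and the full group is a semidirect product. I must then verify that each such automorphism of $\oq(M(k,p))$ is realised by the restriction of some element of $\ca$ that fixes $y$. For the row and column scalings, this is immediate from the definition of the action of $\ch_1$ on the dehomogenisation restricted to those $h=(\alpha_0;\alpha_1,\dots,\alpha_k;\beta_1,\dots,\beta_p)$ with $\alpha_0=1$, together with the descent to $\ch$ of Proposition~\ref{proposition-hdash}. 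For the transpose in the case $2k=n$, it is exactly the observation in Section~\ref{section-dhom-autos} that the transpose of $\oq(M(k,k))$ extends across the dehomogenisation equality to the diagram automorphism $\tau$ of $\oqgkn$.

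Combining these, I can find $a\in\ca$ with $a(y)=y$ whose restriction to $\oq(M(k,p))$ equals $\bar\rho'$. Since $a$ and $\rho'$ agree on the generators $\{x_{ij}\}\cup\{y^{\pm 1}\}$ of $T$, they agree on all of $T$, and in particular on $\oqgkn$. Hence $\rho'=a$ and $\rho=h^{-1}\circ a\in\ca$, completing the proof.

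The main obstacle is the bookkeeping in the case $2k=n$: one must verify carefully that the transpose automorphism of $\oq(M(k,k))$, when transported through the dehomogenisation equality, really does restrict to the diagram automorphism $\tau$ of $\oqgkn$ (this is claimed just before Definition~\ref{definition-autos} but must be checked against Lemma~\ref{lemma-to-and-fro}), and that the semidirect product structure on $\ca=\ch\rtimes\langle\tau\rangle$ matches the corresponding semidirect product structure on $\mathrm{Aut}(\oq(M(k,k)))$ under this correspondence. In the case $k<p$ the argument is essentially a direct translation, and the only subtlety is ensuring that the passage from $\ch_1$ down to $\ch$ via the kernel identified in Proposition~\ref{proposition-hdash} covers the whole quotient $(K^*)^{k+p}/(K^*)$ appearing in the quantum matrix automorphism group.
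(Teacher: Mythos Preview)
Your proposal is correct and follows essentially the same approach as the paper: adjust by an element of $\ch$ to fix $[u]$ and $[w]$, apply Theorem~\ref{theorem-reduced-auto} to restrict to $\oq(M(k,p))$, invoke the classification of automorphisms of quantum matrices from \cite{ll1,y1}, and observe that every such automorphism is already realised by an element of $\ca$ acting through the dehomogenisation equality. Your write-up is somewhat more explicit than the paper's about why the resulting $a\in\ca$ and $\rho'$ must agree on all of $T$ (via agreement on generators), and you correctly flag the only delicate point, namely that the transpose on $\oq(M(k,k))$ corresponds to the diagram automorphism $\tau$ under dehomogenisation, which the paper records just before Definition~\ref{definition-autos}.
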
 

\begin{proof} 
Let $\rho$ be an arbitrary automorphism of $\oqgkn$. By  Corollary~\ref{adjust-u}, there is  an automorphism $h\in\ch$ such that $h\cdot\rho([u])=[u]$ and $h\cdot\rho([w])=[w]$.  It is enough to prove that this adjusted automorphism is in $\ch$ or in $\ch\rtimes\langle\tau\rangle$; so we may assume that $\rho([u])=[u]$ and $\rho([w])=[w]$. 
This adjusted automorphism satisfies the hypothesis for 
Theorem~\ref{theorem-reduced-auto} and so $\rho$ extends naturally to an automorphism of $T=\oqgkn[[u]^{-1}]=\oq(M(k,p))[y^{\pm 1};\sigma]$ 
such that $\rho(y)=y$ and $\rho(\oq(M(k,p)))=\oq(M(k,p))$. Given this, the action of $\rho$ is completely determined by its restriction to $\oq(M(k,p))$; so it is enough to show that $\rho$ restricted to $\oq(M(k,p))$ is realised by an element of $\ch$ or, if $2k=n$, that   either $\rho$ or $\rho\circ\tau$ is realised by an element of $\ch$.

If $2k \neq n$, then by \cite[Corollary 4.11 and its proof\,]{ll1} $\rho$ is determined on $\oq(M(k,p))$by row and column operations and so is in $\ch$, as required.

If $2k=n$, then by \cite[Theorem 3.2]{y1} either $\rho$ or $\rho\circ\tau$ is determined by row and column operations and so is in $\ch$. In either case, $\rho\in\ch\rtimes\langle\tau\rangle$, as required.
\end{proof}





~\\~\\
\begin{minipage}{\textwidth}
\noindent S Launois\\
School of Mathematics, Statistics and Actuarial Science,\\
University of Kent\\
Canterbury, Kent, CT2 7FS,\\ UK\\[0.5ex]
email: S.Launois@kent.ac.uk \\

\noindent T H Lenagan\\
Maxwell Institute for Mathematical Sciences,\\
School of Mathematics, University of Edinburgh,\\
James Clerk Maxwell Building\\
The King's Buildings\\
Peter Guthrie Tait Road\\
Edinburgh EH9 3FD \\
           UK\\[0.5ex]
email: tom@maths.ed.ac.uk\\
\end{minipage}



\begin{thebibliography}{MM}


\bibitem{ac} J Alev and M Chamarie, {\em Derivations et automorphismes de quelques algebras quantiques}, Comm Alg 20 (1992), 1787 -- 1802.

\bibitem{ag} J M Allman and J E Grabowski, {\em A quantum analogue of the dihedral action on
Grassmannians}, Journal of Algebra 359 (2012), 49 -- 68.

\bibitem{chatters} A W Chatters, {\em Non-commutative unique factorisation domains}, Math. Proc. Cambridge Philos. Soc. 95 (1984), 4 -- 54.


\bibitem{gl} K R Goodearl and T H Lenagan, {\em Primitive ideals in quantum $SL_3$ and $GL_3$}, New Trends in Noncommutative Algebra, Contemporary Mathematics 562 (2012), 115 -- 140.

\bibitem{gw} K R Goodearl and R B Warfield Jr., An Introduction to Noncommutative Noetherian Rings. Second edition. London Mathematical Society Student Texts, 61. Cambridge University Press, Cambridge, 2004. xxiv+344 pp. 

\bibitem{gy} K R Goodearl and M T Yakimov, {\em Unipotent and Nakayama automorphisms of quantum nilpotent algebras}, Commutative algebra and noncommutative algebraic geometry. Vol. II, 181 -- 212, Math. Sci. Res. Inst. Publ., 68, Cambridge Univ. Press, New York, 2015.

\bibitem{klr}  A C Kelly, T H Lenagan and L Rigal,  {\em Ring theoretic properties of quantum grassmannians}, J. Algebra Appl. 3 (2004),  9 -- 30.

\bibitem{kl} D Krob and B Leclerc, {\em Minor identities for Quasi-Determinants and Quantum Determinants}, Commun. Math. Phys. 169 (1995), 1 -- 23.


\bibitem{launois} S Launois, {\em Primitive ideals and automomorphism group of $U_q^+(B_2)$},  J. Algebra Appl. 6 (2007), no. 1, 21 -- 47.

\bibitem{ll1} S Launois and T H Lenagan, {\em Primitive ideals and automorphisms of quantum
matrices}, Algebr. Represent. Theor. 10 (4), (2007), 339 -- 365.


\bibitem{ll2}  S Launois and T H Lenagan, {\em Automorphisms of quantum matrices}, Glasgow Mathematical Journal 55A (2013), 89 -- 100. Special issue in honour of 60th birthdays of Kenny Brown and Toby Stafford.

\bibitem{lln} S Launois, T H Lenagan and B M Nolan, {\em Total positivity is a quantum phenomenon: the grassmannian case}, arXiv:1906.06199, to appear in Memoirs of the American Mathematical Society.

\bibitem{llr}S Launois, T H Lenagan and L Rigal, {\em Quantum unique factorisation domains}, J. London Math. Soc. (2) 74 (2006), no. 2, 321 -- 340.

\bibitem{llr-selecta} S Launois, T H Lenagan and L Rigal,  {\em Prime ideals in the quantum Grassmannian}, Selecta Math. (N.S.) 13 (4) (2008), 697 -- 725.

\bibitem{lr} T H Lenagan and E J Russell, {\em Cyclic orders on the quantum Grassmannian}, Arab. J. Sci. Eng. Sect. C Theme Issues 33 (2) (2008), 337 -- 350.

\bibitem{mcr} J C McConnell and J C Robson, Noncommutative noetherian rings. With the cooperation of L. W. Small. Revised edition. Graduate Studies in Mathematics, 30. American Mathematical Society, Providence, RI, 2001. xx+636 pp.



\bibitem{y1} M Yakimov, {\em The Launois--Lenagan  conjecture}, J Algebra 392 (2013), 1 -- 9.


\bibitem{y2} M Yakimov, {\em Rigidity of quantum tori and the Andruskiewitsch--Dumas conjecture}, Selecta Math. 20 (2014), 421 -- 464.
\end{thebibliography}
\end{document}